\documentclass[12pt]{amsart}

\usepackage[latin1]{inputenc}
\usepackage{latexsym, enumerate}
\usepackage{graphicx}
\usepackage{subfigure}
\usepackage{color}
\usepackage{amsmath, amsfonts, amssymb, hyperref,stmaryrd}
\usepackage{graphics}

\oddsidemargin=0in
\evensidemargin=0in
\textwidth=6.50in             
\newcommand{\N}{\mathbb{N}}

\newcommand{\fmf}{$F$-mul\-ti\-pli\-city free}
\definecolor{gr75}{gray}{0.75}

\headheight=10pt
\headsep=10pt
\topmargin=.5in
\textheight=8in

\numberwithin{equation}{section}
\setcounter{MaxMatrixCols}{20}



\newcommand{\news}[1]{\ensuremath{\blacktriangleright _s}#1\ensuremath{\blacktriangleleft _s}}

\renewcommand{\news}[1]{#1}


\newcommand{\ms}{\begin{math}}
\newcommand{\me}{\end{math}}

\newcommand{\qs}{\mathcal{S}}

\newcommand{\LC}{\mathcal{L}_C}
\newcommand{\QQ}{QSym}
\newcommand{\SSS}{Sym}

\newcommand{\com}{\operatorname{com}}
\newcommand{\des}{\operatorname{des}}
\newcommand{\set}{\operatorname{set}}
\newcommand{\sh}{\operatorname{sh}}

\newlength\cellsize \setlength\cellsize{15\unitlength}
\savebox2{%
\begin{picture}(15,15)
\put(0,0){\line(1,0){15}}
\put(0,0){\line(0,1){15}}
\put(15,0){\line(0,1){15}}
\put(0,15){\line(1,0){15}}
\end{picture}}
\newcommand\cellify[1]{\def\thearg{#1}\def\nothing{}%
\ifx\thearg\nothing
\vrule width0pt height\cellsize depth0pt\else
\hbox to 0pt{\usebox2\hss}\fi%
\vbox to 15\unitlength{
\vss
\hbox to 15\unitlength{\hss$#1$\hss}
\vss}}
\newcommand\tableau[1]{\vtop{\let\\=\cr
\setlength\baselineskip{-16000pt}
\setlength\lineskiplimit{16000pt}
\setlength\lineskip{0pt}
\halign{&\cellify{##}\cr#1\crcr}}}
\savebox3{%
\begin{picture}(15,15)
\put(0,0){\line(1,0){15}}
\put(0,0){\line(0,1){15}}
\put(15,0){\line(0,1){15}}
\put(0,15){\line(1,0){15}}
\end{picture}}
\newcommand\expath[1]{%
\hbox to 0pt{\usebox3\hss}%
\vbox to 15\unitlength{
\vss
\hbox to 15\unitlength{\hss$#1$\hss}
\vss}}


\theoremstyle{plain}
\newtheorem{theorem}{Theorem}[section]

\newtheorem{lemma}[theorem]{Lemma}
\newtheorem{corollary}[theorem]{Corollary}

\newtheorem{definition}[theorem]{Definition}
\newtheorem{example}[theorem]{Example}

\newtheorem{remark}[theorem]{Remark}

\begin{document}
\title[Multiplicity free quasisymmetric and Schur functions]{Multiplicity free Schur, skew Schur, and quasisymmetric Schur functions}

\author{C.\ Bessenrodt}
\address{Institut f\"{u}r Algebra, Zahlentheorie und Diskrete Mathematik,
Leibniz Universit\"{a}t, Hannover,
D-30167, Germany}
\email{\href{mailto:bessen@math.uni-hannover.de }{bessen@math.uni-hannover.de}}

\author{S.\ van Willigenburg}
\address{Department of Mathematics, University of British Columbia, Vancouver, BC V6T 1Z2, Canada}
\email{\href{mailto:steph@math.ubc.ca}{steph@math.ubc.ca}}
\thanks{The authors' collaboration was supported in part by the Alexander von Humboldt Foundation
and the National Sciences and Engineering Research Council of Canada.}
\subjclass[2010]{Primary 05E05; Secondary 05A15, 05E15}
\keywords{compositions, multiplicity free, quasisymmetric function, Schur function, skew Schur function, tableaux}

\begin{abstract}
In this paper we classify all Schur functions and skew Schur functions that are multiplicity free when expanded in the basis of fundamental quasisymmetric functions, termed \fmf.  Combinatorially, this is equivalent to classifying all skew shapes whose standard Young tableaux have distinct descent sets. We then generalize our setting, and classify all \fmf\ quasisymmetric Schur functions with one or two terms in the expansion, or one or two parts in the indexing composition. This identifies composition shapes such that all standard composition tableaux of that shape have distinct descent sets.
We conclude by providing such a classification for quasisymmetric Schur function families, giving  a classification of Schur functions that are in some sense almost \fmf.
\end{abstract}

\date{Version of February 27, 2012}

\maketitle

\emph{Running title: Multiplicity free quasisymmetric and Schur functions}

\section{Introduction}\label{sec:intro} The algebra of symmetric functions, $Sym$, is central to algebraic combinatorics and impacts many other areas of mathematics such as representation theory and algebraic geometry. A nonsymmetric generalization of $Sym$ is the algebra of quasisymmetric functions, $\QQ$. $\QQ$ was first introduced in the 1980s as weight enumerators for P-partitions~\cite{Gessel}. Since then quasisymmetric functions have grown in importance, arising in many areas including the theory of Hopf algebras~\cite{MR}, chain enumeration \cite{Ehrenborg}, discrete geometry via the cd-index~\cite{BHvW}, combinatorics of random walks \cite{Hersh-Hsiao}, representation theory~\cite{Hivert}, and affine Grassmannians~\cite{LLMS}.

A natural question to study in symmetric function theory is when certain expansions of symmetric functions are multiplicity free. For example, the product of two Schur functions when expanded as a sum of Schur functions~\cite{Stembridge}, or two Schur $P$-functions expanded as a sum of Schur $P$-functions \cite{Bessenrodt}, skew Schur functions expanded as a sum of Schur functions~\cite{Gutschwager, ThomasYong}, Schur $P$-functions as a sum of Schur functions~\cite{ShawvW}.

In this paper we extend this study and consider the problem of when a (skew) Schur function expanded as a sum of fundamental quasisymmetric functions is multiplicity free, termed \fmf. Fundamental quasisymmetric functions are important to consider due to their connections to representation theory, poset enumeration and permutation enumeration. In our context, the property of a skew Schur function indexed by a shape~$D$
being \fmf\ is equivalent to the property that the set of  standard Young tableaux  of shape $D$ consists of tableaux that all have distinct descent sets.
We then generalize this question to consider recently discovered quasisymmetric functions that inherit many properties of Schur functions \cite{SQS, QS, LRQS,  Lauve-Mason}, known as quasisymmetric Schur functions.

More precisely, this paper is structured as follows. In Section~\ref{sec:background} we review necessary background material. Then in Section~\ref{sec:FmfSchur} we classify all Schur functions and skew Schur functions that are \fmf\ in Theorem~\ref{the:SFmf} and Theorem~\ref{the:skewFmf}, respectively. 
We focus our attention on quasisymmetric Schur functions in 
Section~\ref{sec:QSFmf12terms}, classifying all those with one or two terms and observing they are \fmf\ in Theorem~\ref{the:QS12term}. We classify \fmf\ quasisymmetric Schur functions whose indexing composition has  two parts in Theorem~\ref{the:QSFmf12part}. Finally in Theorem~\ref{the:Fmffam} we classify all families of quasisymmetric Schur functions each member of which is \fmf.

\

\noindent\textbf{Acknowledgements.} The authors would like to thank Sarah Mason for her quasisymmetric Schur function generating program. They would also like to thank Caleb Cheek and the anonymous referees for helpful suggestions.

\section{Background and preliminaries}\label{sec:background}
\subsection{Compositions and partitions}\label{subsec:comppart}
A \emph{composition} $\alpha = (\alpha _1, \ldots , \alpha _k)$ of $n$, denoted $\alpha \vDash n$, is a list of positive integers such that
$\sum _{i=1}^k \alpha_i=n$. We call the $\alpha _i$ the \emph{parts} of $\alpha$ and $n$ the \emph{size} of $\alpha$, denoted $|\alpha |$. The number of parts, $k$, is called the \emph{length} of $\alpha$ denoted $\ell(\alpha)$, and the magnitude of the largest part is called the \emph{width} of $\alpha$ denoted $w(\alpha)$. If $\alpha _j = \cdots = \alpha _{j+m}= i$ we often abbreviate this sublist to $i^m$. A \emph{partition} is a composition whose parts are in weakly decreasing order, that is $\alpha _1 \geq \cdots \geq \alpha _k$. Note that every composition $\alpha$ determines a partition $\lambda(\alpha)$ that is obtained by arranging the parts of $\alpha$ in weakly decreasing order. We denote by $\langle\lambda \rangle  $ the set of all compositions that determine the partition $\lambda$.

Given a composition  $\alpha$,
there are two compositions that are closely related to $\alpha$.
The first is the reverse of $\alpha$, $\alpha ^\ast = (\alpha _k, \ldots , \alpha _1)$, and the second is the complement of $\alpha$. To define the complement of $\alpha$, $\hat{\alpha}$,  recall the natural bijection between compositions of $n$ and subsets of $[n-1]=\{1, 2, \ldots , n-1\}$:
$$\alpha = (\alpha _1, \ldots , \alpha _k) \leftrightarrow \{ \alpha _1 , \alpha _1+\alpha _2 , \ldots , \alpha _1 + \alpha _2 +\cdots + \alpha _{k-1}\}=\set(\alpha).$$
Then $\hat{\alpha}=\set^{-1}((\set(\alpha)^c)$.
Note that forming the reverse and the complement commutes, that is,
$\hat{\alpha}^*=\widehat{\alpha^*}$.

Given compositions $\alpha, \beta$, we say that $\alpha$ is a \emph{refinement} of $\beta$ (or $\beta$ is a \emph{coarsening} of~$\alpha$), denoted $\alpha \preccurlyeq \beta$, if summing some consecutive parts of~$\alpha$ gives $\beta$. We also define the \emph{concatenation} of $\alpha$ and $\beta$, denoted $\alpha \cdot \beta$, to be the composition consisting of the parts of~$\alpha$ followed by the parts of~$\beta$. One final special composition is the empty partition of~0,
denoted~$\emptyset$.

\begin{example}If $\alpha = (2,1,2,2)$ and $\beta = (3,4)$ then $\lambda = \lambda(\alpha)=(2,2,2,1)$
$$\langle\lambda\rangle   = \{(1,2,2,2), (2,1,2,2), (2,2,1,2), (2,2,2,1)\}.$$Note that $\alpha^\ast = (2,2,1,2)$, $\hat{\alpha}= (1,3,2,1)$, $\alpha \preccurlyeq \beta$ and $\alpha \cdot \beta = (2,1,2,2,3,4)$.
\end{example}

\subsection{Young diagrams and tableaux}\label{subsec:SYT}
Given a partition $\lambda = (\lambda _1, \ldots , \lambda _{\ell(\lambda)})$
we say that its \emph{(Young) diagram}, also denoted $\lambda$, is the array
of left-justified cells with $\lambda _i$  cells in row~$i$, from the top,
for $1\leq i \leq \ell(\lambda)$.
We locate cells in the diagram by their row and column indices $(i,j)$
where $1\leq i \leq \ell(\lambda)$ and $1\leq j \leq \lambda _1$.
Given two partitions $\lambda$ and $\mu$ we say $\mu$ is \emph{contained}
in $\lambda$, denoted $\mu \subset \lambda$ if $\ell(\mu) \leq \ell (\lambda)$
and $\mu _i \leq \lambda _i$ for $1\leq i \leq \ell(\mu)$.
If $\mu \subset \lambda$ then the \emph{skew diagram} $D=\lambda /\mu$
is the array of cells whose indices satisfy
$$\lambda / \mu = \{(i,j)\ |\ (i,j)\in \lambda , (i,j) \not\in \mu\}.$$
Note that every diagram $\lambda$ can be considered as a skew diagram
$\lambda / \emptyset$. The number of cells in a diagram $D$ 
is called its \emph{size}, denoted $|D|$. 
The \emph{row lengths} of $D=\lambda / \mu$ are the numbers $\lambda_i-\mu_i$,
$1\leq i \leq  \ell(\lambda)$ \news{ and $\mu _i =0$ for $\ell(\mu) < i \leq \ell(\lambda)$};
similarly for the \emph{column lengths} of $D$ we count the \news{cells}
of $D$ in the columns. 
Given two  skew diagrams, $D_1$ and $D_2$, a disjoint union of them $D_1\oplus D_2$ is obtained by placing $D_2$ strictly to the north and east of $D_1$ in such a way that $D_1, D_2$ occupy none of the same rows or columns. We say a skew diagram is \emph{disconnected} if it can be written as a disjoint union of two skew diagrams, and \emph{connected} if it cannot.
Two useful operations on a skew diagram $D$ are the \emph{antipodal rotation} of $180^\circ$ in the plane of $D$, denoted $D^\circ$, and the \emph{transpose} of $D$, denoted $D^t$, where $D^t$ is the array of cells whose indices satisfy
$$D^t = \{ (i,j)\ |\ (j,i)\in D\}.$$If $D$ is a Young diagram with corresponding partition $\lambda$ then we call the partition $\lambda ^t$ corresponding to $D^t$ the \emph{transpose} of $\lambda$.

\begin{example} $$\lambda = (3,2,2,1) = \tableau{\ &\ &\ \\\ &\ \\\ &\ \\\ }\ \mu = (1,1) = \tableau{\ \\\ }\ \lambda / \mu = \tableau{&\ &\ \\& \ \\\ &\ \\\ }\ (\lambda/\mu)^t = \tableau{&&\ &\ \\\ &\ &\ \\\ }$$
\end{example}

A filling of the cells of a (skew) diagram $D$ with positive integers such that the entries weakly increase when read from left to right in a row, and strictly increase when read from top to bottom in a column, is called a \emph{semi-standard Young tableau}  (SSYT, plural SSYTx) $T$ of \emph{shape} $D$.
If furthermore $|D|=n$ and each of $1, \ldots ,n$ appears, then the filling is called a \emph{standard Young tableau} (SYT, plural SYTx).
Given an SYT $T$ we define its \emph{descent set} to be the set of all $i$ such that $i+1$ appears in a cell in a lower row than $i$.
Since we will also consider different descent sets in the context of compositions below, we denote the descent set of an SYT by $\des _p(T)$.
The composition $\set^{-1}(\des _p(T))$ associated to the descent set of
an SYT $T$ will be denoted by $\com(T)$.
Given an SSYT $T$ we say it is a \emph{Littlewood-Richardson} (LR) tableau if 
its reverse reading word is a lattice permutation, i.e., 
as we read the entries right to left by row from top to bottom,
then the number of $i$'s we have read is always at least the number of $i+1$'s.

\begin{example} For convenience we replace the cells by their entries when drawing tableaux:
$$\begin{matrix}&2&3\\&4\\1&6\\5\end{matrix}\qquad \begin{matrix}&1&1\\&2\\1&3\\2\end{matrix}$$
are respectively an SYT with descent set $\{ 3,4\}$ and an SSYT that is an LR tableau.
\end{example}

\subsection{Composition diagrams and tableaux}\label{subsec:SCT} Given a composition $\alpha = (\alpha _1 , \ldots , \alpha _{\ell(\alpha)})$ we say its composition diagram, also denoted $\alpha$, is the array of left-justified cells with $\alpha _i$  cells in row $i$ from the top, for $1\leq i\leq \ell(\alpha)$. As with Young diagrams, cells are located by their row and column indices, and the number of cells is called the \emph{size} of $\alpha$, denoted $|\alpha|$. Also, as with Young diagrams, we will want to create tableaux, and for our subsequent proofs the most useful definition is via the following poset first introduced in \cite{SQS}.

\begin{definition}\label{def:CT}
We say that the composition  \emph{$\gamma$ covers $\beta$}, denoted $\beta\lessdot \gamma $, if $\gamma $ can be obtained from $\beta $ either by prepending $\beta $ with a new part of size~1  
(i.e., $\gamma = (1)\cdot \beta$), 
or by adding 1 to the first (leftmost) part of $\beta$ of size $k$ for some $k$.
The partial order $\leq$ defined on the set of all compositions is the transitive closure of these cover relations, and the resulting poset we denote $\LC$.
\end{definition}

\begin{example}
The composition $\beta=(2,2,1,3,2,3)$ is covered in $\LC$ by the compositions
$(1,2,2,1,3,2,3)$, $(3,2,1,3,2,3)$,  $(2,2,2,3,2,3)$ and  $(2,2,1,4,2,3)$.
\end{example}

Standard composition tableaux can be created in the following way, which we can take
to be our definition by \cite[Proposition 2.11]{SQS}.

\begin{definition}\label{lem:makeCTfromLC} Let $\alpha \vDash n$ and
$$\emptyset= 
\alpha^{n+1} \lessdot \alpha ^{n} \lessdot \alpha ^{n-1} \lessdot
\cdots \lessdot \alpha ^{2} \lessdot \alpha ^1  = \alpha$$
be a sequence of consecutive cover relations in $\LC$.
If we fill the cell that differs between 
$\alpha ^{i+1}$ and $\alpha ^{i}$ with $i$ 
for
$1\leq i \leq n$ then we call the resulting filling a \emph{standard composition
tableau} (SCT, plural SCTx) of \emph{shape} $\alpha$.
\end{definition}

\begin{example}
The sequence
$$\emptyset \lessdot (1) \lessdot (1,1) \lessdot (2,1)
\lessdot (1,2,1) \lessdot (2,2,1) \lessdot (1,2,2,1) \lessdot
(1,3,2,1) \lessdot (2,3,2,1) $$
is a sequence of consecutive cover relations in $\LC$ that
corresponds to the SCT $T'$ in Example~\ref{ex:SCT} below.
\end{example}

As with SYTx we can define the descent set of an SCT. However in this case we define
the \emph{descent set} of an SCT $T$, denoted $\des _c(T)$, to be the set of all $i$
such that $i+1$ appears in a cell weakly
to the right of~$i$. It is straightforward to see
that given a composition diagram
$\alpha = (\alpha _1, \ldots , \alpha _{\ell (\alpha)})$,
if the cells of row $i$ are filled with
$$1+\sum _{j=0}^{i-1} \alpha _{j} , 
2+\sum _{j=0}^{i-1} \alpha _{j} , \ldots ,
\alpha_i -1 +\sum _{j=0}^{i-1} \alpha _{j} , 
\sum _{j=0}^{i} \alpha _{j}$$
(where $\alpha_0=0$) to form a filling $T$,
then $T$ is an SCT of shape~$\alpha$ with $\des _c(T) = \set(\alpha)$.
We call this filling the \emph{canonical  filling} of $\alpha$
and say that each row is \emph{row filled}.
The composition $\set^{-1}(\des_c(T))$ associated to the descent set of an
SCT $T$ will be denoted by $\com(T)$.
Lastly, given an SCT $T$ and a positive integer $m$,
we denote by $T+m$ the filling that has $m$ added to each entry of $T$.

\begin{example}\label{ex:SCT} The composition tableau 
$$T=\begin{matrix} 2&1\\5&4&3\\7&6\\8\end{matrix}$$
is the canonical filling of $(2,3,2,1)$ and
$$T'=\begin{matrix} 3&1\\5&4&2\\7&6\\8\end{matrix}$$
is an SCT with $des_c(T')=\{1,3,5,7\}$.
\end{example}

\subsection{Quasisymmetric and symmetric functions}\label{subsec:QsymSym}
The algebra of quasisymmetric functions, $\QQ$, is a graded algebra
$$\QQ := \QQ_0 \oplus \QQ _1 \oplus \cdots \subseteq \mathbb{Q}[ x_1, x_2, \ldots ]$$
where $\QQ _0$ is spanned by $M_0=1$ and all other $\QQ _n$ are spanned by
$\{ M_\alpha \} _{\alpha = (\alpha _1, \ldots , \alpha _{\ell(\alpha)})\vDash n}$
where
$$M_\alpha = \sum _{i_1<\cdots < i_{\ell (\alpha)}} x_{i_1}^{\alpha _1}\cdots
x_{i_{\ell (\alpha)}}^{\alpha _{\ell (\alpha)}}.$$
This basis is called the basis of \emph{monomial quasisymmetric functions}.
A second basis for $\QQ$, called the basis of \emph{fundamental quasisymmetric
functions}, consists of $F_0=1$ and
$$F_\alpha = \sum _{\beta \preccurlyeq \alpha} M_\beta.$$
A third, recently discovered, basis is the basis of \emph{quasisymmetric Schur
functions}, which consists of $\qs _0=1$ and $\{ \qs_\alpha \} _{\alpha \vDash n}$
that are defined as follows.

\begin{definition}\label{def:QSasF}
For $\alpha \vDash n$ let the \emph{quasisymmetric Schur function} $\qs _\alpha$ be
$$\qs _\alpha = \sum _{\beta \vDash n} d_{\alpha\beta} F_\beta$$
where $d_{\alpha\beta}=$ the number of SCTx $T$ of shape~$\alpha$ and
$\com(T)=\beta$.
\end{definition}

\begin{example} If $n=4$
$$M_{(1,3)}= x_1x_2^3 + x_1x_3^3 + x_2x_3^3 + \cdots\quad F_{(1,3)}= M_{(1,3)}+M_{(1,2,1)}+M_{(1,1,2)}+M_{(1,1,1,1)}$$$$ \qs _{(1,3)}=F_{(1,3)} + F_{(2,2)}$$
from the SCTx
$$\begin{matrix}1\\4&3&2\end{matrix}\qquad \begin{matrix}2\\4&3&1\end{matrix} .$$
\end{example}

The algebra of symmetric functions, $\SSS$, is a graded subalgebra of $\QQ$
$$\SSS := \SSS_0 \oplus \SSS _1 \oplus \cdots$$with a variety of bases, the most renowned of which is the basis consisting of Schur functions, whose connection
with quasisymmetric Schur functions can be described very simply as follows. Let $\lambda$ be a partition, then the \emph{Schur function} $s_\lambda$ is
\begin{equation}\label{eq:SasQS}
s_\lambda = \sum _{\alpha \in \langle \lambda \rangle} \qs _\alpha .
\end{equation}
A more familiar description is the following, which we state more
generally for the case of skew Schur functions, and
which will be crucial in this paper. We will take this
to be our definition of skew Schur functions.

\begin{definition}\cite[Theorem 7.19.7]{ECII}\label{def:SasF}
For $D$ a skew diagram with $|D|=n$,  
let the \emph{skew Schur function} $s _D$ be
$$s _D = \sum _{\beta \vDash n} d_{D\beta} F_\beta$$ 
where $d_{D\beta}=$ the number of SYTx $T$ of shape~$D$ and $\com(T)=\beta$. 
When $D=\lambda$ is a partition, we call $s_\lambda$ a \emph{Schur function}.
\end{definition}

Another useful result for us will be the expansion of a skew Schur function 
in terms of Schur functions, known as the \emph{Littlewood-Richardson (LR) rule}: 
for $D$ a skew diagram, 
the expansion of the skew Schur function $s_D$  in the basis of Schur functions
is given by
$$s_D= \sum c_{D\lambda} s_\lambda $$
where $c_{D\lambda}$ is the number of LR tableaux of shape~$D$ with
$\lambda _1$ ones, $\lambda _2$ twos, etc.,
where $\lambda = (\lambda _1 , \lambda _2 ,\ldots)$.

We are now ready to present our final definition, which will be the focus of our study.

\begin{definition}\label{def:fmf}
Let $G$ be a quasisymmetric function. Then we say
$$G=\sum _\alpha c_\alpha F_\alpha$$
is \emph{\fmf\ }if $c_\alpha=0$ or $1$ for all compositions $\alpha$.
\end{definition}

\begin{remark}\label{rem:fmf-comb}
Note that in the case of quasisymmetric Schur functions
and skew Schur functions we may
rephrase this condition combinatorially in the following way.
By Definition~\ref{def:QSasF},
a quasisymmetric Schur function $\qs _\alpha$ is
\fmf\ if and only if the SCTx of shape~$\alpha$
all have different descent sets,
and by Definition~\ref{def:SasF}, a skew Schur function $s_D$ is
\fmf\ if and only if the SYTx of shape~$D$ all have different
descent sets.
\end{remark}

\section{Multiplicity free Schur and skew Schur functions}\label{sec:FmfSchur}

In this section we determine necessary and sufficient conditions for when
$$s_D = \sum _\beta d_{D\beta} F _\beta$$
is \fmf\ for some (skew) diagram~$D$.
We begin by resolving the situation when $D$  is a partition. However, we first reduce the number of cases we need to consider with the following lemma.

\begin{lemma}\label{lem:transposes}
Let $\lambda$ be a partition. Then $s_\lambda$ is \fmf\ if and only if $s_{\lambda ^t}$ is \fmf.
\end{lemma}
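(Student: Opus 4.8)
The plan is to exploit the classical transpose symmetry of standard Young tableaux. Recall that transposing a diagram $D$ induces a bijection between SYTx of shape $D$ and SYTx of shape $D^t$, where an SYT $T$ of shape $\lambda$ is sent to the SYT $T^t$ of shape $\lambda^t$ obtained by reflecting across the main diagonal. The key observation is how this bijection affects descent sets: if $i$ and $i+1$ sit in $T$ with $i+1$ in a strictly lower row than $i$ (so $i \in \des_p(T)$), then after transposing, $i+1$ sits in a strictly lower \emph{column}, hence weakly higher row — in fact strictly higher row, since $i$ and $i+1$ cannot share a cell — so $i \notin \des_p(T^t)$. Thus $\des_p(T^t) = [n-1] \setminus \des_p(T)$, i.e. $\com(T^t) = \widehat{\com(T)}$, the complement in the sense defined in Section~\ref{subsec:comppart}.

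First I would state this descent-complementation fact precisely (it is standard, e.g. it underlies the identity $s_{\lambda^t} = \omega(s_\lambda)$ expressed on fundamental quasisymmetric functions via $\omega(F_\alpha) = F_{\widehat{\alpha}}$), giving a one-line justification from the definition of $\des_p$. Next I would observe that $T \mapsto T^t$ is a bijection from SYTx of shape $\lambda$ to SYTx of shape $\lambda^t$, and that it composes the descent-set map with the complementation involution $\alpha \mapsto \widehat{\alpha}$ on compositions of $n$, which is itself a bijection. Therefore the multiset of descent sets occurring among SYTx of shape $\lambda^t$ is exactly the image under complementation of the multiset of descent sets occurring among SYTx of shape $\lambda$. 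Since complementation is injective, the descent sets of the SYTx of shape $\lambda^t$ are pairwise distinct if and only if those of shape $\lambda$ are. By Remark~\ref{rem:fmf-comb}, this says precisely that $s_{\lambda^t}$ is \fmf\ if and only if $s_\lambda$ is, completing the proof.

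I do not anticipate a genuine obstacle here; the only point requiring minor care is verifying the sign/row-column swap in the descent computation, namely that $i$ and $i+1$ never occupy the same row (respectively column) of an SYT, so that "strictly lower column" transposes to "strictly higher row" rather than merely "weakly higher row" — but this is immediate since the entries $1, \ldots, n$ are distinct and each row and column is strictly increasing in an SYT after standardization of rows is not even needed. One could alternatively phrase the entire argument representation-theoretically via the involution $\omega$ on $\SSS$ (or on $\QQ$) using $\omega(s_\lambda) = s_{\lambda^t}$ and $\omega(F_\alpha) = F_{\widehat{\alpha}}$, which immediately gives $d_{\lambda^t, \beta} = d_{\lambda, \widehat{\beta}}$ and hence the coefficient multisets agree up to relabeling; I would likely mention this as the conceptual reason while giving the tableau bijection as the self-contained proof.
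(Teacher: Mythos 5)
Your proof is correct, but it leads with the combinatorial route where the paper takes the algebraic one. The paper's proof is a two-line application of the involution $\omega$ on $\QQ$, citing $\omega(s_\lambda)=s_{\lambda^t}$ and $\omega(F_{\alpha^\ast})=F_{\widehat{\alpha}}$ from Ehrenborg, so that the $F$-coefficients of $s_{\lambda^t}$ are those of $s_\lambda$ relabelled by the bijection $\alpha\mapsto\widehat{\alpha}^\ast$ on compositions of $n$ --- essentially the ``conceptual'' argument you relegate to your closing remarks (note the paper's convention composes complementation with reversal, whereas your $F_\alpha\mapsto F_{\widehat{\alpha}}$ is the involution usually denoted $\psi$; both send $s_\lambda$ to $s_{\lambda^t}$, so either works). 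Your primary argument instead realizes the relabelling explicitly at the level of tableaux via $T\mapsto T^t$ and $\des_p(T^t)=[n-1]\setminus\des_p(T)$; this is self-contained and more informative, at the cost of actually verifying the descent-complementation fact, and there you have one small slip: from ``$i+1$ strictly below $i$ in $T$'' you infer that $i+1$ lies in a \emph{strictly} higher row of $T^t$ ``since $i$ and $i+1$ cannot share a cell,'' but that is not the right reason and the conclusion can fail ($i$ and $i+1$ may share a column of $T$, hence a row of $T^t$); fortunately only ``weakly higher'' is needed for that direction. The direction that genuinely needs an argument is the converse: if $i\notin\des_p(T)$ then $i+1$ must be strictly to the right of $i$, which follows because consecutive entries of a straight-shape SYT cannot sit strictly southeast of one another (the cell in the row of $i$ and column of $i+1$ would have to contain an entry strictly between $i$ and $i+1$). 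With that dichotomy stated, your bijective argument is complete and fully equivalent to the paper's.
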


\begin{proof} Using the involution $\omega$ on quasisymmetric functions that satisfies $\omega(F_{\alpha ^\ast})=F_{\hat{\alpha}}$ and $\omega(s _\lambda)=s_{\lambda ^t}$ \cite[Section 5]{Ehrenborg}, if $s_{\lambda} = \sum _\alpha d_{\lambda \alpha} F _\alpha$ is \fmf\ then
$$s_{\lambda ^t} = \omega(s _\lambda) = \omega (\sum _\alpha d_{\lambda \alpha} F _\alpha) = \sum _\alpha d_{\lambda \alpha} F_{\hat{\alpha}^\ast}$$is \fmf. The converse then holds since transposing is an involution.
\end{proof}

We now identify two families of Schur functions that are \fmf\ by giving explicit formulae for them.
Below, and elsewhere when the meaning is clear from the context, we will also index quasisymmetric functions  by the sets $\set(\alpha)$ instead of
the compositions~$\alpha$, when the formulae can be stated more succinctly this way.

\begin{lemma}\label{lem:2rowhook}
\begin{enumerate}[(i)]
\item For $n \geq 1, 0\leq k \leq n-1$,
$$s _{(n-k , 1^k)} = \sum _{{R\subseteq [n-1]} \atop {|R|=k}}F_R \:.$$
\item For $n \geq 4$,
$$s _{(n-2 , 2)} =
\sum _{i=2}^{n-2} F_{\{i\}} + \sum _{j=3}^{n-1} \sum _{i=1}^{j-2} F_{\{ i,j\}} \:.$$
\end{enumerate}
\end{lemma}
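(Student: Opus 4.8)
The plan is to compute both Schur functions directly from Definition~\ref{def:SasF}, i.e.\ by enumerating the standard Young tableaux of the relevant shape and recording their descent sets; the claimed identities then amount to showing that the map $T \mapsto \des_p(T)$ is a bijection onto the index set appearing on the right-hand side (and in particular each coefficient $d_{D\beta}$ is $0$ or $1$). For part~(i), the shape is the hook $(n-k,1^k)$. An SYT of this shape is determined by choosing which of the entries $2,3,\ldots,n$ go down the first column (the entry $1$ is forced into the corner), say a $k$-element set $S\subseteq\{2,\ldots,n\}$; the remaining entries fill the first row in increasing order. I would observe that $i$ is a descent of $T$ exactly when $i+1$ lies strictly below $i$, and check that the resulting descent set is $R = \{\, i \in [n-1] : i+1 \in S \,\}$ (equivalently, the set of ``drops''), which ranges bijectively over all $k$-subsets of $[n-1]$ as $S$ ranges over all $k$-subsets of $\{2,\ldots,n\}$. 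Hence $s_{(n-k,1^k)} = \sum_{R\subseteq[n-1],\,|R|=k} F_R$, with every coefficient equal to $1$.

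For part~(ii), the shape is $(n-2,2)$ for $n\ge 4$: two cells in the second row, $n-2$ in the first. An SYT is determined by which two entries occupy the second row; necessarily the cell $(2,1)$ holds some value $a$ and $(2,2)$ holds some value $b$ with $a<b$, $a\ge 2$ (since $1$ is in the corner) and $a \le b-2$ actually need not hold in general — rather the column-strict and row-weak conditions force $1$ in $(1,1)$, and the entry directly above $(2,2)$, namely $(1,2)$, must be smaller than $b$, so in fact the constraint is that $(1,2)$'s entry is less than $b$; one finds the valid pairs $(a,b)$ are exactly those with $2\le a$ and $a+1 \le b \le n$ subject to the first-row positions working out, and the descent set is either $\{a-1,b-1\}$ or, when $b=a+1$, the singleton $\{a-1\}$ is not quite right — here I must be careful: if $a$ and $b=a+1$ are the two second-row entries then $a$ sits strictly below $a-1$ and $a+1=b$ sits weakly right of (in fact below-and-right of) $a$, and $a+2$ (if it exists) sits in row~$1$, so the descent set is $\{a-1\}$ when $a+1=b$, hmm, but that contradicts the stated formula. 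The right bookkeeping: let $a=(2,1)$-entry, $b=(2,2)$-entry. Then $a-1$ is always a descent; $b-1$ is a descent iff $b-1$ is in row~$1$, i.e.\ iff $b-1\ne a$, i.e.\ iff $b\ge a+2$. When $b=a+1$ the only descent is $a-1\in\{1,\ldots,n-3\}$... so I would instead parametrize by the descent set directly: writing $i=a-1$ and $j=b-1$ gives two-element descent sets $\{i,j\}$ with $1\le i$, $i+2\le j$... and when $b=a+1$, only singletons $\{i\}$ with $i \ge 1$ arise — this suggests I should double-check the exact ranges against small cases ($n=4,5$) to pin down the endpoints $2\le i\le n-2$ for singletons and $1\le i\le j-2$, $3\le j\le n-1$ for pairs, then verify the correspondence is a bijection onto exactly that index set.

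The main obstacle is precisely this careful case analysis for part~(ii): getting the endpoints of the index ranges exactly right, and confirming that distinct tableaux give distinct descent sets (so that no coefficient exceeds $1$) while every listed set is actually attained. I expect part~(i) to be essentially immediate once the ``drop set'' description is in place. A useful consistency check throughout is the dimension count: $\sum_R |\{R\}| $ should equal the number of SYTx of the shape, i.e.\ $\binom{n-1}{k}$ for the hook (by the hook length formula this is $f^{(n-k,1^k)}$), and $(n-2) + \binom{n-2}{2}$ for $(n-2,2)$ should match $f^{(n-2,2)}$; verifying these equalities confirms both that the expansions are multiplicity free and that the stated index sets are complete. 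Alternatively, for part~(ii) one could avoid tableau enumeration entirely by invoking the Jacobi--Trudi / Pieri expansion $s_{(n-2,2)} = s_{(n-2)}s_{(2)} - s_{(n-1)}s_{(1)}$ together with the known fundamental expansions of the $s_{(m)}$ (all of whose coefficients are $1$), but translating products of $F$'s into the $F$-basis still requires bookkeeping comparable to the tableau argument, so I would present the direct SYT computation as the cleaner route.
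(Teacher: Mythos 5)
Your proposal is correct and follows essentially the same route as the paper: both parts are proved by enumerating SYTx directly from Definition~\ref{def:SasF} (hooks parametrized by their first-column entries, the shape $(n-2,2)$ by its two second-row entries $a<b$) and checking that the descent-set map is a bijection onto the stated index set. The one endpoint you flagged as needing verification --- that singleton descent sets $\{i\}$ require $i\geq 2$ --- follows from the column-strictness condition you already identified: if the second row were $2,3$ then the cell $(1,2)$ would have to contain an entry less than $3$, which is impossible since the first row would begin $1,4,\ldots$.
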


\begin{proof} 
For the first part, observe that for an SYT of shape~$(n-k, 1^k)$, with
$t_1,\ldots,t_k$  in the first column but not the first row, its descent set is the
$k$-element set $R=\{t_1-1,\ldots,t_k-1\} \subseteq [n-1]$.
Since the entries in an SYT of  shape $(n-k, 1^k)$ increase along the first row and first column, for any $k$-element set~$R\subseteq [n-1]$ we get a unique
SYT of shape $(n-k, 1^k)$ with descent set $R$.
The result now follows from Definition~\ref{def:SasF}.

For the second part, observe that an SYT of shape $(n-2,2)$ with entries $i+1$, $i+2$ in the second row has only one descent $i$, and this can only take values
in $[2,\ldots,n-2]$.
Meanwhile an SYT of this shape with entries $i+1$, $j+1$ in the second row with $i+1<j$
has  descent set $\{i,j\}$, but this only occurs for
$i\in [1,\ldots,n-3]$, $j\in [i+2,\ldots.n-1]$. Thus, note that any SYT
of shape $(n-2,2)$ is determined by its descent set.
The result now follows from Definition~\ref{def:SasF}.
\end{proof}

\begin{theorem}\label{the:SFmf}
Let $\lambda$ be a partition of $n$. Then $s_\lambda$ is \fmf\ if and only if $\lambda$ or $\lambda ^t$ is one of
\begin{enumerate}[(i)]
\item $(3,3)$ if $n=6$,
\item $(4,4)$ if $n=8$,
\item $(n-2, 2)$ if $n \geq 4$,
\item $(n-k, 1^k)$ if $n \geq 1$ and $0\leq k \leq n-1$.
\end{enumerate}
\end{theorem}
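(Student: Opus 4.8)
The plan is to prove both directions. The ``if'' direction is the easy one: cases (iii) and (iv) are handled by the explicit formulae in Lemma~\ref{lem:2rowhook}, where the proof of that lemma already exhibits a bijection between descent sets and SYTx, so multiplicity freeness is immediate. For the two sporadic cases $(3,3)$ and $(4,4)$ one simply lists the SYTx (there are $5$ of shape $(3,3)$ and $14$ of shape $(4,4)$) and checks that their descent sets are pairwise distinct; this is a finite verification that can be stated as ``direct inspection'' or displayed in a small table. By Lemma~\ref{lem:transposes}, the same conclusion holds for the transposes $\lambda^t$, so every listed shape is genuinely \fmf.

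The substance is the ``only if'' direction: if $\lambda$ is not (up to transpose) on the list, then $s_\lambda$ has a repeated coefficient. First I would invoke Lemma~\ref{lem:transposes} to assume $\lambda_1 \geq \ell(\lambda)$, i.e.\ $\lambda$ is ``at least as wide as tall.'' The key structural observation is that the number of standard Young tableaux, $f^\lambda$, grows while the number of subsets of $[n-1]$ available as descent sets is only $2^{n-1}$; but a cleaner route than crude counting is a \emph{containment/monotonicity} argument. Namely, if $\mu \subseteq \lambda$ and $\mu$ is not \fmf, then $\lambda$ is not \fmf: given two SYTx of shape $\mu$ with the same descent set, extend both by row-filling the cells of $\lambda/\mu$ in a fixed standard order (e.g.\ left-to-right along successive rows, or down successive columns, whichever keeps things standard) using the entries $|\mu|+1,\dots,n$; the two resulting SYTx of shape $\lambda$ still share a descent set provided the extension is done identically. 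Actually one must be slightly careful that the extension preserves standardness and that the two tableaux still agree on the new descents --- choosing to append a full new row (so $\lambda/\mu$ is a horizontal strip forming one new row, or a vertical strip forming one new column) makes this transparent. So the strategy is: identify a small list of \emph{minimal} non-\fmf\ shapes, verify each is non-\fmf\ by exhibiting two SYTx with equal descent sets, and then show every $\lambda$ not on the allowed list contains one of these minimal bad shapes (or has a bad shape obtainable from it by the row/column extension above).

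Concretely, after assuming $\lambda_1 \geq \ell(\lambda)$, the allowed shapes with $\ell(\lambda) \geq 2$ are exactly $(n-2,2)$, the hooks $(n-k,1^k)$, $(3,3)$, and $(4,4)$. So I must show: (a) if $\ell(\lambda) \geq 3$ and $\lambda$ is not a hook, then $\lambda \supseteq (2,2,2)$ or $\lambda \supseteq (3,3,1)$ or similar small bad shape --- and these small shapes are non-\fmf; (b) if $\ell(\lambda) = 2$, say $\lambda = (a,b)$ with $a \geq b \geq 2$, then $\lambda$ is \fmf\ only for $b=2$ (any $a$), or $(3,3)$, or $(4,4)$; equivalently $(b,b)$ for $b \geq 5$ and $(a,3)$ for $a \geq 4$ are non-\fmf, which again one checks by finding two SYTx with a common descent set and then propagating up via horizontal-strip extension. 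The main obstacle is getting the minimal bad shapes right and organizing case (b) cleanly: one needs a robust small collision --- for instance, in shape $(a,3)$ with $a \geq 4$ two different tableaux realizing descent set $\{1,3\}$ or the like --- and then an argument that adding cells to the first row (horizontal strip extension) never destroys the collision. I expect the two-row case, and pinning down why $(3,3)$ and $(4,4)$ are the only two-row squares that survive (so that $(5,5)$ must be shown bad explicitly, then $(a,a)$ for $a\geq 6$ reduced to it, then $(a,b)$ for $a > b \geq 3$ reduced to $(b+1,b)$ or to $(a,3)$), to be the fiddliest part; the containment/extension lemma is the workhorse that keeps this finite.
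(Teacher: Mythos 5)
Your ``if'' direction coincides with the paper's (Lemmas~\ref{lem:transposes} and~\ref{lem:2rowhook} plus a finite check of $(3,3)$ and $(4,4)$), so the issue is the ``only if'' direction. The containment lemma you designate as the workhorse --- if $\mu\subseteq\lambda$ and $s_\mu$ is not \fmf, then $s_\lambda$ is not \fmf\ --- is false, and a counterexample sits inside the very theorem you are proving: $(4,3)\subset(4,4)$, $s_{(4,3)}$ is not \fmf\ (two SYTx share the descent set $\{2,5\}$), yet $s_{(4,4)}$ is \fmf. The failure is precisely the point you flag and then set aside: when both colliding tableaux of shape $\mu$ are extended identically by $|\mu|+1,\dots,n$, the entry $|\mu|$ generally occupies \emph{different} rows in the two tableaux, so whether $|\mu|$ becomes a descent depends on which row receives $|\mu|+1$; if that cell is forced into a row strictly below one occurrence of $|\mu|$ but not the other (as the unique addable cell $(2,4)$ of $(4,4)/(4,3)$ is), the two descent sets come apart. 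Restricting to appending a full new row or column does make each step safe, but such steps cannot reach an arbitrary $\lambda\supseteq\mu$ (e.g.\ $(5,4)$ from $(4,3)$), so your reduction to minimal bad shapes does not go through as written. A second concrete error: $(2,2,2)$ is not a ``small bad shape'' --- it is $(3,3)^t$ and hence \fmf\ by Lemma~\ref{lem:transposes} and part (i) of the statement; the correct minimal bad shapes are $(3,2,1)$ and $(4,3)$.

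The paper's proof is organized to avoid exactly these traps. It observes that if $\lambda$ and $\lambda^t$ both avoid the list then $\lambda$ or $\lambda^t$ contains $(3,2,1)$ or $(4,3)$, exhibits explicit colliding pairs of SYTx in those two shapes, and extends by placing the next entry either in the top row (so the old maximum is a descent in neither tableau) or in a row strictly below both occurrences of the old maximum (so it is a descent in both); after that the tableaux are completed identically and the collision persists. The two shapes for which no such safe placement of the next entry exists, namely $(3,3,1)$ over $(3,2,1)$ and $(4,4)$ over $(4,3)$, are handled separately: $(3,3,1)$ gets its own explicit collision and $(4,4)$ is verified directly to be \fmf. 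If you want to salvage your approach, you must replace the false monotonicity lemma by this restricted extension principle and check by hand the finitely many shapes it cannot reach.
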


\begin{proof}
Note that by direct computation $s_{(3,3)}$ and $s_{(4,4)}$ are \fmf. Thus by  Lemma~\ref{lem:transposes} and Lemma~\ref{lem:2rowhook}, the Schur functions listed are \fmf. Now all that remains is to establish that all other Schur functions are not \fmf. To do this we first note that if $\lambda$ (or $\lambda ^t$) is not one of those listed then the diagram $\lambda$ (or $\lambda ^t$) contains $(3,2,1)$ or $(4,3)$.

Observe that $s_{(3,2,1)}$ is not \fmf\ since it contains
$F_{\{2,4\}}$ with multiplicity two, arising from the SYTx
\begin{equation}\label{exp:2x222}\begin{matrix}1&2&4\\3&6\\5\end{matrix}\qquad \begin{matrix}1&2&6\\3&4\\5\end{matrix}\ .\end{equation}
Similarly $s_{(4,3)}$ is not \fmf\ since it contains $F_{\{2,5\}}$ with multiplicity two, arising from the SYTx
\begin{equation}\label{exp:2x232}
\begin{matrix}1&2&5&7\\3&4&6\end{matrix}\qquad \begin{matrix}1&2&4&5\\3&6&7\end{matrix}
\ .\end{equation} 
Now, if $\lambda$ strictly contains $(3,2,1)$, we are either in the case
$\lambda = (3,3,1)$ where we have the SYTx
$$\begin{matrix}1&2&4\\3&5&6\\7\end{matrix}\qquad \begin{matrix}1&2&6\\3&4&7\\5\end{matrix}\ ,$$
or we can extend the SYTx in \eqref{exp:2x222}
to SYTx of shape~$\lambda$  with the same descent set
by placing $7$ in the top, third or fourth row, and then
extend these by filling the remaining cells in both diagrams identically.
Thus by Definition~\ref{def:SasF}, $s_\lambda$ is not \fmf .

On the other hand, if $\lambda\neq (4,4)$ strictly contains $(4,3)$,
then we can extend the SYTx in \eqref{exp:2x232} to SYTx of shape~$\lambda$
with the same descent set by placing $8$ in the top or third row,
and a further identical extension of both tableaux yields
again that $s_\lambda$ is not \fmf .

If $\lambda ^t \neq (4,4)$ contains $(4,3)$
then similarly $s_{\lambda ^t}$ is not \fmf ,
and $s_\lambda$ is not \fmf , by Lemma~\ref{lem:transposes}. 
\end{proof}

Combining this theorem with the LR rule yields the following more general result.

\begin{theorem}\label{the:skewFmf} 
Let $D$ be a skew diagram of size $n$. Then $s_D$ is \fmf\ if and only if
up to transpose, $D$ or $D^\circ$ is one of
\begin{enumerate}[(i)]
\item $(3,3)$ if $n=6$,
\item $(4,4)$ if $n=8$,
\item $(n-2,2)$ if $n\geq 4$,
\item $(n-k, 1^k)$ if $n\geq 1$ and $0\leq k \leq n-1$,
\item $(n-k) \oplus (1^k)$, $0<k<n$.
\end{enumerate}
Thus this is a complete list of skew diagrams $D$ such that all SYTx
of shape~$D$ have distinct descent sets.
\end{theorem}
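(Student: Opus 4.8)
The plan is to deduce this from Theorem~\ref{the:SFmf} via the Littlewood--Richardson rule, after quotienting out the symmetries of skew Schur functions.

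First I would record the symmetries. As in the proof of Lemma~\ref{lem:transposes}, the involution $\omega$ permutes the fundamental basis and satisfies $\omega(s_D)=s_{D^t}$, so $s_D$ is \fmf\ iff $s_{D^t}$ is; and rotating an SYT of $D$ by $180^\circ$ while replacing each entry $i$ by $n+1-i$ is a bijection from the SYTx of $D$ onto those of $D^\circ$ carrying $\des_p$ to its reverse complement, so it preserves the number of SYTx with any given descent set, whence $s_D$ is \fmf\ iff $s_{D^\circ}$ is. Thus it suffices to treat $D$ up to transpose and rotation. For the ``if'' direction, shapes (i)--(iv) are partitions and are handled by Theorem~\ref{the:SFmf}. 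For (v), $D=(n-k)\oplus(1^k)$ is a single row together with a single column placed disjointly, so an SYT of $D$ is determined by the $k$-element set $S$ of entries placed in the column (the remaining $n-k$ entries fill the row in increasing order), and a direct check in the spirit of Lemma~\ref{lem:2rowhook}(i) shows that its descent set is exactly $S\cap[n-1]$; hence $S$, and therefore the tableau, is recovered from its descent set. In fact $s_{(n-k)\oplus(1^k)}=\sum F_R$, the sum over $R\subseteq[n-1]$ with $|R|\in\{k-1,k\}$, which is visibly \fmf.

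Next, the reduction for the ``only if'' direction. By the LR rule $s_D=\sum_\nu c_{D\nu}s_\nu$ with all $c_{D\nu}\ge 0$, and $s_\nu=\sum_\beta d_{\nu\beta}F_\beta$ with $d_{\nu\beta}\ge 0$, so no cancellation occurs and $d_{D\beta}=\sum_\nu c_{D\nu}d_{\nu\beta}$. Consequently $s_D$ is \fmf\ if and only if (a) $c_{D\nu}\le 1$ for all $\nu$, i.e. $s_D$ is multiplicity free already as a sum of Schur functions; (b) every $\nu$ with $c_{D\nu}=1$ has $s_\nu$ \fmf, i.e. by Theorem~\ref{the:SFmf} such $\nu$ is, up to transpose, a hook $(m-j,1^j)$, a two-row shape $(m-2,2)$, or $(3,3)$ or $(4,4)$, where $m=|D|$; and (c) for distinct $\nu,\nu'$ appearing in $s_D$, the $F$-supports of $s_\nu$ and $s_{\nu'}$, which are explicit from Lemma~\ref{lem:2rowhook}, are disjoint. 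Condition (c) is the decisive one: $s_{(3,1)}$ and $s_{(2,2)}$ both contain $F_{\{2\}}$, so any $D$ whose Schur expansion involves both fails, which for instance already rules out the four-cell zigzag $(3,2)/(1)$, since $s_{(3,2)/(1)}=s_{(3,1)}+s_{(2,2)}$.

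It remains to show that any $D$ satisfying (a), (b), (c) lies in the list up to transpose and rotation. If $D$ is a partition (up to these symmetries) this is Theorem~\ref{the:SFmf}. If $D$ is disconnected, it has exactly two connected components $D_1,D_2$, since $(1)\oplus(1)\oplus(1)$ would otherwise occur as a lower-ideal sub-skew-shape of $D$ and force, after extending its bad pair of SYTx, a repetition in $s_D$; if $D_1$ is neither a single row nor a single column it contains an $L$-tromino, so $D$ contains $L\oplus\{c\}$ for a single cell $c$, and since every orientation of $L$ is obtained from $(2,1)\oplus(1)$ by transpose and rotation and $s_{(2,1)\oplus(1)}=s_{(3,1)}+s_{(2,2)}+s_{(2,1,1)}$ is not \fmf, $s_D$ is not \fmf; hence each $D_i$ is a row or a column, two rows or two columns of length $\ge 2$ are excluded because they contain $(2)\oplus(2)$ or its transpose, and what survives is exactly $(n-k)\oplus(1^k)$. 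Finally, if $D$ is connected but none of $D,D^t,D^\circ$ or $D^{\circ t}$ is a partition, one shows $D$ contains, placed as a lower-ideal sub-skew-shape, a copy of the zigzag $(3,2)/(1)$ or its transpose, and then extends a pair of equal-descent-set SYTx of that sub-shape to all of $D$ exactly as in the proof of Theorem~\ref{the:SFmf}, choosing carefully the cell that receives the entry immediately after the sub-shape so that no spurious descent is created in only one tableau.

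The main obstacle is this last step: proving precisely that a connected skew shape which is not a Young diagram up to transpose and rotation must contain such a well-placed zigzag (many such shapes are in fact already eliminated more cheaply by (a) or by (b), so the argument only has to produce the zigzag in the residual cases where the Schur expansion is multiplicity free and all its components are individually \fmf), and keeping the extension bookkeeping honest. An alternative to re-deriving everything is to start from the known classification of multiplicity free skew Schur functions of Gutschwager and of Thomas--Yong and simply intersect it with conditions (b) and (c). Either way, the conceptual subtlety throughout is condition (c): it is not enough that each Schur component be \fmf; their fundamental expansions must be pairwise disjoint, and it is exactly this that trims the classification down to (i)--(v).
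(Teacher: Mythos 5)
Your framework is sound: the symmetry reductions, the observation that $d_{D\beta}=\sum_\nu c_{D\nu}d_{\nu\beta}$ with no cancellation, the resulting three necessary-and-sufficient conditions (a) Schur-multiplicity-freeness, (b) each Schur component \fmf, (c) pairwise disjoint $F$-supports, and the direct treatment of case (v) are all correct, and the ``if'' direction is complete. But the ``only if'' direction has a genuine gap exactly where you flag it. The claim that every connected skew shape which is not a Young diagram up to transpose and rotation contains a ``well-placed'' copy of the zigzag $(3,2)/(1)$, together with an extension of its two equal-descent-set SYTx to all of $D$, is neither proved nor routine: the extension step is not a formality (in the proof of Theorem~\ref{the:SFmf} the shape $(3,3,1)$ already required a separate pair of tableaux because the naive extension of the $(3,2,1)$ pair fails there), and for skew shapes the cell receiving the next entry can change the descent set of one tableau but not the other. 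Your fallback of invoking Gutschwager/Thomas--Yong only disposes of condition (a); the case analysis for (b) and (c) would still have to be done shape by shape. As written, the classification is therefore not established.

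The paper avoids this structural lemma entirely by a sharper use of the LR rule: it exhibits explicit LR fillings showing that $s_{R_D}$ \emph{and} $s_{(C_D)^t}$ are both Schur summands of $s_D$, where $R_D$ and $C_D$ are the sorted row and column lengths of $D$. Your condition (b) then forces \emph{both} $R_D$ and $C_D^t$ to lie on the short list of Theorem~\ref{the:SFmf}, which constrains the multiset of row lengths (and column lengths) of $D$ to a handful of possibilities --- $(n-2,2)$, $(4^2)$, $(3^2)$, $(2^4)$, $(2^3)$, $(2^2,1^{n-4})$, $(n-k,1^k)$ --- each of which is then eliminated or confirmed by a one-line argument (an explicit repeated $F_\beta$, or a non-\fmf\ Schur component, or an overlap of $F$-supports as in your condition (c)). If you want to salvage your route, the cleanest fix is to replace your zigzag lemma with this row-length/column-length reduction and then carry out the short residual case check.
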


\begin{proof} 
It is an easy consequence of the LR rule
that if $R_D$ is the partition consisting
of the row lengths of $D$ listed in weakly decreasing order, and $C_D$ is the partition
consisting of the column lengths of $D$ listed in weakly decreasing order, then $s_D$
contains both $s_{R_D}$ and $s_{(C_D) ^t}$ as summands.

More precisely, consider the SSYT where each column $c$ is filled with $1, 2, \ldots , \ell_c$ and $\ell_c$ is the number of cells in column~$c$. This is clearly an LR tableau contributing~1 to the coefficient of $s_{(C_D) ^t}$. Meanwhile, to obtain the term $s_{R_D}$ consider the SSYT formed as follows. Fill the rightmost cell of the highest not completely filled row with a~1. Then fill the rightmost cell of the second highest  not completely filled row with a~2. Continue  until the rightmost cell of the lowest  not completely filled row is filled. Repeat with remaining unfilled cells  until an SSYT is produced, which is easily checked to be an LR tableau contributing~1 to the coefficient of $s_{R_D}$. 

It also follows from the LR rule
that $s_D=s_{D^\circ}$. Therefore, by Theorem~\ref{the:SFmf} it is sufficient to check
only those skew Schur functions $s_D$ such that $R_D$ is one of the partitions listed
there but $D$ or $D^\circ$ is not one of the diagrams listed therein.

\begin{description}
\item[Case $R_D= (n-2,2)\ n\geq 4 $] Note that $F_{(2,n-2)}$ appears with multiplicity.
\item[Case $R_D=(4^2)$] Using the LR rule this contains  $s_{(5,3)}$ as a term in the Schur function expansion of $s_D$, which is not \fmf.
\item[Case $R_D= (2^4)$] Contains  either $s_{(3,2,2,1)}$ or $s_{(3,3,1,1)}$ as a term in the Schur function expansion of $s_D$, which is not \fmf.
\item[Case $R_D= (3^2)$] Contains  $s_{(4,2)}+s_{(3,3)}$ as a term in the Schur function expansion of~$s_D$, so by the proof of Theorem~\ref{the:SFmf} and Lemma~\ref{lem:2rowhook} we have that $F_{(3^2)}$ appears with multiplicity.
\item[Case $R_D= (2^3)$] Contains $s_{(3,2,1)}$ as a term in the Schur function expansion of~$s_D$, which is not \fmf.
\item[Case $R_D= (2^2,1^{(n-4)}), n\geq 5$] Note that $F_{(1,2^2,1^{n-5})}$ appears with multiplicity.
\item[Case $R_D= (n-k,1^k), n > k$] If $D=(n-k) \oplus (1^k)$ then $s_D = s_{(n-k+1, 1^{k-1})}+s_{(n-k, 1^k)}$. Otherwise if $n=3$ then $F_{(2,1)}$ appears with multiplicity. If $n\geq 4$ then $F_{(1,2,1) \cdot \beta}$ for $\beta \vDash n-4$ will appear with multiplicity.
\end{description}
This concludes the proof.
\end{proof}

\section{Multiplicity free quasisymmetric Schur  functions with one or two terms}\label{sec:QSFmf12terms}
A natural avenue to pursue is to establish when quasisymmetric Schur functions are \fmf. Classifying when quasisymmetric (skew) Schur functions are \fmf\ seems more subtle than the symmetric situation, however, we can classify certain special cases. To this end we begin by classifying all quasisymmetric Schur functions with one or two terms that are \fmf. This task is simplified by the following lemma. For simplicity of exposition we denote the shape of an SCT $T$ by $\sh(T)$.

\begin{lemma}\label{lem:reduction}
Let $\alpha = (\alpha _1 ,\ldots , \alpha _k)$ be a composition.
\begin{enumerate}[(i)]
\item There exists a unique SCT $T$ with $\sh(T)=\alpha$ and $\com(T)=\alpha$.
\item If  $(\alpha _i , \alpha _{i+1})\not\in \{(m,1), (1,2) \mid  m\geq 1\}$ for some $i$, then there exists an SCT $T'$ with $\sh(T')=\alpha$ and $\com(T')\neq\alpha$.
\item If $(\alpha _i , \alpha _{i+1})\not\in \{(m,1), (1,2), (2,2), (2,3), (1,3) \mid m\geq 1\}$ for some $i$, then there exist at least two SCTx $T', T''$, with $\sh(T')=\sh(T'')=\alpha$ and $\com(T'), \com(T'')\neq\alpha$ and $\com(T')\neq \com(T'')$.
\end{enumerate}
\end{lemma}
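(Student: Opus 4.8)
\textbf{Proof proposal for Lemma~\ref{lem:reduction}.}

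The plan is to build the required SCTx explicitly, starting from the canonical filling of $\alpha$ and making small, controlled modifications in rows $i$ and $i+1$ where the hypothesis on $(\alpha_i,\alpha_{i+1})$ allows it. For part~(i), the existence of an SCT $T$ with $\com(T)=\alpha$ is immediate: the canonical filling of $\alpha$ has $\des_c = \set(\alpha)$ by the computation recalled in Section~\ref{subsec:SCT}. For uniqueness, I would argue that $\com(T)=\alpha$ forces each row of $T$ to be row filled: having $\des_c(T)=\set(\alpha)$ means the descents of $T$ occur exactly at the ends of the rows of $\alpha$, so within any single row the entries $j, j+1, \ldots$ that are consecutive integers cannot be split across a descent; combined with the column-strictness and the covering description of SCTx via $\LC$ (Definitions~\ref{def:CT} and~\ref{lem:makeCTfromLC}), an induction on $n$ (peeling off the largest entry) shows the only possibility is the canonical filling. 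This is the easy part.

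For part~(ii), I would take the canonical filling and alter only the way rows $i$ and $i+1$ are filled while leaving all other rows row filled, so that the descent pattern changes only near position $i$. Write $a$ for the last entry of row $i$ in the canonical filling (so $a = \sum_{j\le i}\alpha_j$) and note row $i+1$ contains $a+1,\ldots,a+\alpha_{i+1}$. The point is that when $(\alpha_i,\alpha_{i+1})$ is \emph{not} of the form $(m,1)$ or $(1,2)$, there is enough room to place $a+1$ in row $i$ (or otherwise permute a couple of the relevant entries) in a way that still satisfies the SCT covering conditions in $\LC$ but produces a descent set different from $\set(\alpha)$ — typically one loses the descent at $a$ or gains one at $a-1$ or $a+1$. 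I would organize this by cases on $(\alpha_i,\alpha_{i+1})$: the excluded pairs $(m,1)$ and $(1,2)$ are precisely the local configurations that are too rigid (a single cell in row $i+1$, or a single cell in row $i$ below which sits a row of length $2$), and in every other case one exhibits the modified filling directly and checks it is a legal SCT of shape $\alpha$ with $\com(T')\ne\alpha$.

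For part~(iii), the same strategy is pushed one step further: I want \emph{two} distinct non-canonical descent sets, so I need the local shape at $(\alpha_i,\alpha_{i+1})$ to admit at least two genuinely different rearrangements of rows $i,i+1$ (and possibly a small interaction with row $i+2$ or with which cell of row $i+1$ receives the smallest available entry). The list of pairs excluded here — $(m,1)$, $(1,2)$, $(2,2)$, $(2,3)$, $(1,3)$ — is exactly the set of configurations that are either too rigid (as in~(ii)) or admit only one non-canonical filling; for all other pairs one writes down $T'$ and $T''$ explicitly and verifies $\com(T'),\com(T'')\ne\alpha$ and $\com(T')\ne\com(T'')$. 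I expect the main obstacle to be the bookkeeping in part~(iii): ensuring that the two constructed fillings really are valid SCTx (the covering conditions in $\LC$ are somewhat delicate, since adding $1$ to the \emph{first} part of a given size interacts with the entries placed in earlier rows) and that their descent sets are distinct from each other and from $\set(\alpha)$, which requires a careful case analysis rather than a single uniform argument. A useful reduction throughout is that modifications confined to rows $i,i+1$ (and at worst $i+2$) change $\des_c$ only in the window $\{a-1,a,a+1,\ldots\}$ around the boundary, so comparing descent sets comes down to comparing a bounded number of entries.
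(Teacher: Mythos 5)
Your overall strategy is the same as the paper's: keep every row except rows $i$ and $i+1$ row filled, and perturb those two rows to change the descent set. Part~(i) is fine (the paper itself only observes that the canonical filling is the unique SCT with $\com(T)=\alpha$, and your sketch of why $\com(T)=\alpha$ forces row filling is a reasonable elaboration). The problem is that for parts~(ii) and~(iii) you never actually produce the fillings: the entire mathematical content of those parts \emph{is} the explicit list of local fillings of rows $i,i+1$, together with the verification that each one arises from a maximal chain in $\LC$ and has the claimed descent set. Writing ``one exhibits the modified filling directly and checks it is a legal SCT'' defers exactly the step that the lemma is about, and your acknowledgement that the bookkeeping is the main obstacle does not discharge it.

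This omission is not routine, because the boundary between the excluded lists in (ii) and (iii) is invisible at the level of your sketch. For example, $(1,2)$ is rigid while $(1,3)$ admits exactly one non-canonical local filling and $(1,4)$ admits two; similarly $(2,3)$ admits one extra filling but $(2,4)$ admits two. Nothing in the heuristic ``there is enough room to place $a+1$ in row $i$'' predicts these thresholds --- they come out only when you write the candidate fillings down and test them against the cover relations of $\LC$ (which constrain, for instance, which entry of the block $\{x-\alpha_{i+1},\ldots,x\}$ may sit alone in a short row $i$ above a longer row $i+1$). The paper handles this by a case split on $(\alpha_i,\alpha_{i+1})$ into $\alpha_i=1,\alpha_{i+1}\geq 3$; $\alpha_i\geq 2,\alpha_{i+1}=2$; $\alpha_i\geq 2,\alpha_{i+1}>2$ for part~(ii) (and the analogous refinement with $\alpha_{i+1}\geq 4$ for part~(iii)), giving one explicit filling (respectively, two explicit fillings with distinct descent sets) in each case. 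To complete your proof you would need to supply those fillings and the $\LC$-verification; as written, the argument establishes only part~(i).
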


\begin{proof}
\begin{enumerate}[(i)]
\item This follows immediately by considering the canonical filling of $\alpha$,
which is the unique SCT $T$ with $\sh(T)=\alpha$ and $\com(T)=\alpha$.
\item Now consider a composition diagram $\alpha$ such that there exists a pair
of rows with 
$(\alpha _i , \alpha _{i+1})\not\in \{(m,1), (1,2)\}$. 
Then either $\alpha _i=1, \alpha _{i+1}\geq 3$, or $\alpha _i, \alpha _{i+1}\geq 2$.

We set $x=\sum _{j=1} ^{i+1} \alpha _j$ and describe the filling of rows $i,i+1$,
while the remaining rows are row filled.

If  $\alpha _i=1, \alpha _{i+1}\geq 3$,
we consider the SCT $T'$ with rows $i, {i+1}$ filled
$$\begin{matrix}x-2\\x&x-1&x-3&\cdots&x-\alpha _{i+1}\end{matrix}\:.$$
Now assume $\alpha _i\geq 2$; if $\alpha_{i+1}=2$, we consider
$$\begin{matrix}x-1&x-2&x-4&\cdots &x -\alpha _i -1\\
x&x-3\end{matrix}$$
or otherwise, if $\alpha_{i+1}> 2$ we consider 
$$\begin{matrix}x-2&x-\alpha _{i+1}-1&\cdots&x-\alpha _{i+1}-\alpha _{i}+1\\x&x-1&x-3&\cdots&x-\alpha _{i+1}\end{matrix}\:.$$
Then in all cases we have $\sh(T')=\alpha$ and $\com(T')\neq\alpha$.
\item
Now consider a composition diagram $\alpha$ such that there exists
a pair of rows with 
$$(\alpha _i , \alpha _{i+1})\not\in \{(m,1), (1,2), (2,2), (2,3), (1,3)\}.$$
Then either 
 $\alpha _i=1, \alpha _{i+1}\geq 4$, or
 $\alpha _i=2, \alpha _{i+1}\geq 4$, or
 $\alpha _i\geq 3, \alpha _{i+1}\geq 2$.

Again, we set $x=\sum _{j=1} ^{i+1} \alpha _j$ and focus on the filling
of rows $i,i+1$.

First assume $\alpha _i=1, \alpha _{i+1}\geq 4$.
We consider the SCTx $T', T''$, with rows $i, i+1$ respectively filled
$$\begin{matrix}x-2\\x&x-1&x-3&\cdots&x-\alpha _{i+1}\end{matrix}$$
and
$$\begin{matrix}x-3\\x&x-1&x-2&\cdots&x-\alpha _{i+1}\end{matrix}\ .$$
In the case  $\alpha _i=2, \alpha _{i+1}\geq 4$,  consider
$$\begin{matrix}x-2&x-4\\x&x-1&x-3&x-5&\cdots&x-\alpha _{i+1}-1\end{matrix}$$and $$\begin{matrix}x-3&x-4\\x&x-1&x-2&x-5&\cdots&x-\alpha _{i+1}-1\end{matrix}\ .$$
Finally, assume  $\alpha _i\geq 3$; if $\alpha _{i+1}=2$
we consider
$$\begin{matrix}x-1&x-2&x-4&\cdots&x-\alpha _{i}-1\\x&x-3\\\end{matrix}$$
and
$$\begin{matrix}x-1&x-2&x-3&\cdots&x-\alpha _{i}-1\\x&x-4\end{matrix}\ ,$$
or otherwise, if $\alpha _{i+1} > 2$, consider 
$$\begin{matrix}x-3&x-4&\cdots&\\x&x-1&x-2&\cdots\end{matrix}$$
and
$$\begin{matrix}x-2&x-4&\cdots&\\x&x-1&x-3&\cdots\end{matrix}$$
and the remaining cells of these rows are filled identically.
In all cases the remaining rows are row filled.
Then in all cases we have $\sh(T')=\sh(T'')=\alpha$
with $\com(T'), \com(T'')\neq\alpha$ and $\com(T')\neq \com(T'')$.
\end{enumerate}
\end{proof}

Note that the first part of Lemma~\ref{lem:reduction} says that $F_\alpha$ is always a summand of $\qs _\alpha$ with coefficient~1, and was also established in \cite[Lemma 5.4]{QS}.
\medskip

The following result implies that being \fmf\ is inherited in certain cases.

\begin{lemma}\label{lem:1or2add}
Let $\alpha$ be a composition and
$\qs _\alpha = \sum _\beta d_{\alpha\beta} F_\beta$. Then
$$\qs _{\alpha\cdot (1)} = \sum _\beta d_{\alpha\beta} F_{\beta\cdot (1)}$$and
$$\qs _{\alpha\cdot (1,2)} = \sum _\beta d_{\alpha\beta} F_{\beta\cdot (1,2)}.$$
\end{lemma}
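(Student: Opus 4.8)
The plan is to establish a bijection between SCTx of shape $\alpha$ and SCTx of shape $\alpha\cdot(1)$ (respectively $\alpha\cdot(1,2)$) that shifts descent sets in the predicted way. The key observation is that in the poset $\LC$, building an SCT of shape $\alpha\cdot(1)$ via a saturated chain from $\emptyset$ forces the cell containing the maximal entry $n+1$ (where $n=|\alpha|$) to be the lone cell in the bottommost row: the only way to end at a composition whose last part is $1$ is to have prepended that part last among the cells of its row, and since adding $1$ to the first part of size $1$ would lengthen an existing part, the final cover relation must be the creation of that new bottom row. Thus every SCT $T$ of shape $\alpha\cdot(1)$ is obtained from a unique SCT $\tilde T$ of shape $\alpha$ by appending a new bottom row containing $n+1$; conversely any SCT of shape $\alpha$ extends uniquely this way.

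Next I would track the descent set under this bijection. Since $n+1$ sits strictly below all other entries and is the largest entry, it creates no new descent (nothing is larger than it), and it does not destroy or alter any descent among $1,\dots,n$ because those cells occupy the same relative positions as in $\tilde T$. Hence $\des_c(T) = \des_c(\tilde T)$, which in composition language is exactly $\com(T) = \com(\tilde T)\cdot(1)$: appending a part of size $1$ to a composition corresponds to adjoining the new maximal element to its descent set, matching the fact that $n$ is now a descent precisely when $n+1$ lies weakly right of $n$ — but $n+1$ is the only cell in its row and everything else is above it, so... one must check this boundary carefully. Running $d_{\alpha\beta}$ SCTx of shape $\alpha$ with $\com = \beta$ to $d_{\alpha\beta}$ SCTx of shape $\alpha\cdot(1)$ with $\com = \beta\cdot(1)$, and noting that every composition $\gamma\vDash n+1$ with last part $1$ arises uniquely this way while compositions with last part $\geq 2$ receive coefficient $0$ (no SCT of shape $\alpha\cdot(1)$ can have such a descent composition, since $n\notin\des_c$ would require $n+1$ strictly left of $n$, impossible as $n+1$'s row has length $1$), gives the first identity.

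For the $\alpha\cdot(1,2)$ case I would argue similarly but with two extra cells. A saturated chain ending at $\alpha\cdot(1,2)$ must, reading backwards, first add $1$ to the first part of size $1$ (turning the freshly created bottom row from length $1$ to length $2$) and before that create that bottom row; these are the last two cover relations, so the cells containing $n+1$ and $n+2$ are exactly the two cells of the new bottom row, with $n+1$ in column $1$ and $n+2$ in column $2$. So SCTx of shape $\alpha\cdot(1,2)$ biject with SCTx of shape $\alpha$ by appending the row $n+1\ n+2$. Checking descents: $n+1$ lies weakly right of $n$? Since $n$ is in $\tilde T$ somewhere above and $n+1$ is in column $1$, $n$ is a descent iff $n$ is in column $1$ of $\tilde T$ — wait, this is precisely the subtlety; I would instead observe directly that $\com(T) = \com(\tilde T)\cdot(1,2)$ amounts to saying $n\in\des_c(T)$ and $n+1\notin\des_c(T)$, i.e. $n+1$ is weakly right of $n$ (automatic, as $n+1$ could be in column $1$ while $n$ is in column $1$, so weakly right holds) and $n+2$ is strictly left of $n+1$ — but $n+2$ is in column $2$, strictly right, so $n+1\notin\des_c$ holds. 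The genuinely delicate point is verifying that appending $n+1$ in column $1$ is compatible with $\tilde T$ being an arbitrary SCT, i.e. that no column-strict or triple condition in the definition of SCT is violated; here I would lean on Definition~\ref{lem:makeCTfromLC} (the chain formulation) rather than the intrinsic axioms, since the chain automatically produces a valid SCT.

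The main obstacle I expect is the careful bookkeeping of exactly which cover relations must occur at the top of the chain for a shape ending in $\ldots,1$ or $\ldots,1,2$, and confirming these are forced; once that rigidity is in hand, the descent-set computation and the bijection are routine. A secondary subtlety is making sure the map $\beta\mapsto\beta\cdot(1)$ (resp. $\beta\mapsto\beta\cdot(1,2)$) on compositions of $n$ lands in distinct compositions of $n+1$ (resp. $n+3$) and that no other $\gamma$ can appear, which follows from the forced shape of the last one (resp. two) cells.
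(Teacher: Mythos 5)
Your overall strategy --- a cell-appending bijection between SCTx of shape $\alpha$ and of the extended shape, tracking descent sets --- is the same as the paper's, but your treatment of the second identity breaks down because you have misread the shape $\alpha\cdot(1,2)$. Concatenating $(1,2)$ appends \emph{two} new rows, of lengths $1$ and $2$, hence \emph{three} new cells carrying $n+1,n+2,n+3$ where $n=|\alpha|$; you instead append a single row of two cells $n+1,n+2$, which is the shape $\alpha\cdot(2)$, and your descent analysis (``$n\in\des_c(T)$ and $n+1\notin\des_c(T)$'') correspondingly targets $\com(\tilde T)\cdot(2)$ rather than $\com(\tilde T)\cdot(1,2)$, besides being inconsistent with $|\alpha\cdot(1,2)|=n+3$. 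The correct rigidity statement is that any chain in $\LC$ ending at $\alpha\cdot(1,2)$ must begin $\emptyset\lessdot(1)\lessdot(2)\lessdot(1,2)\lessdot\cdots$: the first-created part becomes the \emph{last} part of the final composition (all later parts are prepended in front of it), and it can only grow from $1$ to $2$ while it is still the leftmost part of size $1$, hence before the penultimate part (which stays at size $1$ forever) is created. So the new last row is row filled with $n+3,n+2$ in columns $1,2$, the new penultimate row holds $n+1$, and one then checks $\des_c(T)=\des_c(\tilde T)\cup\{n,n+1\}$, i.e.\ $\com(T)=\com(\tilde T)\cdot(1,2)$.

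A second, smaller gap affects the first identity as well: you assert $\des_c(T)=\des_c(\tilde T)$ and then flag the boundary check (whether $n$ becomes a descent) without resolving it. In fact $\des_c(T)=\des_c(\tilde T)$ would give $\com(T)$ equal to $\com(\tilde T)$ with its last part increased by $1$, not $\com(\tilde T)\cdot(1)$; what is needed is $\des_c(T)=\des_c(\tilde T)\cup\{n\}$, and this holds because the maximal entry of \emph{any} SCT sits in column $1$ of its last row (it is placed by the cover relation $\emptyset\lessdot(1)$), so $n+1$, also in column $1$, lies weakly to its right. Relatedly, in Definition~\ref{lem:makeCTfromLC} the cover relation adjacent to $\emptyset$ receives the \emph{largest} entry, so the new bottom row is created by the \emph{first} step of the chain, not the ``final cover relation'' as you write; your conclusion for $\alpha\cdot(1)$ is correct but the bookkeeping is reversed. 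For comparison, the paper's proof is exactly this bijection stated in one line: append or remove the cells containing the largest entries, row filling the final row(s).
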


\begin{proof} Note that 
there exists a  bijection between SCTx of shape~$\alpha$ and $\alpha \cdot (1)$: 
simply append (for the inverse, remove) the cell containing $|\alpha|+1$.
Similarly, there exists a bijection  between SCTx of shape~$\alpha$ and 
$\alpha \cdot (1,2)$: simply append (for the inverse, remove) the cells 
containing $|\alpha|+1, |\alpha|+2, |\alpha|+3$ by row filling the final two rows.
\end{proof}

This motivates us to define the following special set of compositions, which also contains the empty composition:
$$C_2=\{ (1^{e_1},2,1^{e_2},\ldots ,2 ,1^{e_k}) \mid  k\in \N_0,
e_i\in \N \mbox{ for } i\in [k-1], e_k \in \N_0\}\:.$$

\begin{corollary}\label{cor:1or2add}
Let $\alpha$ be a composition and $\qs _\alpha = \sum _\beta d_{\alpha\beta} F_\beta$. Then for any $\gamma \in C_2$
$$\qs _{\alpha\cdot \gamma} = \sum _\beta d_{\alpha\beta} F_{\beta\cdot \gamma} \:.$$
\end{corollary}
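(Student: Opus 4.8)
The plan is to prove Corollary~\ref{cor:1or2add} by induction on $k$, the number of $2$'s appearing in $\gamma \in C_2$, peeling off the blocks $1^{e_i}$ and the $2$'s one piece at a time using Lemma~\ref{lem:1or2add}. Writing $\gamma = (1^{e_1}, 2, 1^{e_2}, 2, \ldots, 2, 1^{e_k})$, observe that $\alpha \cdot \gamma$ is built from $\alpha$ by first concatenating $e_1$ copies of $(1)$, then a copy of $(1,2)$ (this accounts for the first $2$ together with the initial $1$ of the block $1^{e_2}$, so one should be slightly careful with the bookkeeping: a factor $2$ preceded by at least one $1$ is exactly the pattern $\ldots \cdot (1) \cdot (2) = \ldots \cdot (1,2)$ — wait, one must instead absorb the $1$ that comes \emph{after} each interior $2$), then $e_2 - 1$ further copies of $(1)$, and so on. The cleanest bookkeeping is to note that $\gamma$ can be expressed as a concatenation of factors each equal to $(1)$ or $(1,2)$ precisely when $\gamma \in C_2$: the condition $e_i \geq 1$ for $i \in [k-1]$ guarantees that each interior $2$ has a $1$ immediately following it (namely the leading $1$ of the next block $1^{e_{i+1}}$), so we may group that $1$ with the $2$ as a factor $(1,2)$, and all remaining $1$'s are singleton factors $(1)$.

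First I would make this decomposition precise: I claim every $\gamma \in C_2$ admits a (not necessarily unique) factorization $\gamma = \delta_1 \cdot \delta_2 \cdots \delta_r$ with each $\delta_j \in \{(1), (1,2)\}$. This is an easy induction on $|\gamma|$: if $\gamma$ starts with $1$, strip that $1$ as $\delta_1 = (1)$ and recurse on the rest, which still lies in $C_2$ (removing a leading $1$ from $1^{e_1}$ either shortens that block or, if $e_1 = 1$ and $k \geq 2$, exposes a leading $2$); if $\gamma$ starts with $2$ — which forces $e_1 = 0$, hence $k \geq 2$ and $e_2$... no: if $e_1=0$ the next entry is $2$ and then comes $1^{e_2}$ with $e_2 \geq 1$ since $2 \leq k$ would need checking, but in the boundary case the trailing block $e_k$ may be $0$ — strip the prefix $(2,1) = $ hmm, that is not a legal factor. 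The correct move when $\gamma$ starts with $2$: since $e_1 = 0$ and $k \geq 1$, if $k = 1$ then $\gamma = (2, 1^{e_1})$... I will instead strip from the \emph{right}, or simply note $\gamma = (2) \cdot \gamma'$ and combine later; the robust statement is that we strip a leading $(1,2)$ whenever the first two entries are $1,2$, a leading $(1)$ whenever the first entry is $1$ and the second is not $2$, and when the first entry is $2$ we observe $2 = $ second entry of $(1,2)$ only after a $1$, so this case is handled by having grouped differently — to sidestep all of this, I will simply define the factorization greedily from the left reading $1$'s and $(1,2)$'s and verify the leftover always stays in $C_2$, using that interior exponents are positive.

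Then the induction is immediate: by Lemma~\ref{lem:1or2add} applied with the composition $\alpha \cdot \delta_1 \cdots \delta_{j-1}$ in place of $\alpha$ and the factor $\delta_j$, if $\qs_{\alpha \cdot \delta_1 \cdots \delta_{j-1}} = \sum_\beta d_{\alpha\beta} F_{\beta \cdot \delta_1 \cdots \delta_{j-1}}$ then $\qs_{\alpha \cdot \delta_1 \cdots \delta_j} = \sum_\beta d_{\alpha\beta} F_{\beta \cdot \delta_1 \cdots \delta_j}$; after $r$ steps we reach $\qs_{\alpha \cdot \gamma} = \sum_\beta d_{\alpha\beta} F_{\beta \cdot \gamma}$, and the base case $r = 0$ (that is, $\gamma = \emptyset$) is trivial. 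The empty composition is included in $C_2$ precisely so that the base case is covered.

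The main obstacle is entirely combinatorial bookkeeping rather than anything deep: verifying that the greedy left-to-right factorization of an arbitrary $\gamma \in C_2$ into factors $(1)$ and $(1,2)$ is always possible and that the residual composition after stripping a factor remains in $C_2$. The delicate point is the interaction between the requirement $e_i \geq 1$ for interior blocks and $e_k \geq 0$ for the final block: one must check that stripping a $(1,2)$ (which consumes a $1$ and the following $2$) or a $(1)$ never produces a composition that violates the "positive interior exponents" condition — it does not, because removing entries from the left only ever deletes a block or turns an interior block into the new first block, and first-block exponents are allowed to be $0$. Once that lemma about $C_2$ is in hand, the rest is a one-line induction invoking Lemma~\ref{lem:1or2add} repeatedly.
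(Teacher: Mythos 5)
Your proposal is correct and is exactly the paper's (implicit) argument: the corollary is stated without proof because it follows by iterating Lemma~\ref{lem:1or2add} over a factorization of $\gamma$ into factors $(1)$ and $(1,2)$. You could skip the greedy algorithm and the worry about grouping a $2$ with the $1$ \emph{after} it (which would produce the illegal factor $(2,1)$) by grouping each $2$ with the $1$ immediately \emph{before} it --- guaranteed to exist since $e_i\geq 1$ for $i\in[k-1]$ --- giving the explicit factorization $\gamma=(1^{e_1-1})\cdot(1,2)\cdot(1^{e_2-1})\cdot(1,2)\cdots(1^{e_{k-1}-1})\cdot(1,2)\cdot(1^{e_k})$ in one line.
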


We define the following set of compositions $C_2'$, which is a subset of $C_2$:
$$C_2'=\{ (1^{e_1},2,1^{e_2},\ldots ,2 ,1^{e_{k-1}},2) \mid  k-1\in \N,
e_i\in \N \mbox{ for } i\in [k-1]\}\:.$$

When $\qs _\alpha = \sum _\beta d_{\alpha \beta}F_\beta$, the
number $|\{\beta \mid d_{\alpha \beta}\neq 0\}|$
is the  {\em number of $F$-components} of $\qs_\alpha$.

\begin{theorem}\label{the:QS12term}
Let $\alpha$ be a composition.
\begin{enumerate}[(i)]
\item
$\qs_\alpha$ has only one $F$-component, and moreover
$\qs _\alpha = F_\alpha$,
if and only if $\alpha = (m)\cdot \gamma$ for some $\gamma\in C_2$,
$m\in \N_0$. (Here $m=0$ should be understood as non-appearing in the composition.)
\item
$\qs_\alpha$ has two $F$-components, and moreover
$\qs _\alpha = F_\alpha+F_\beta$ where $\beta\neq \alpha$,
if and only if
\begin{enumerate}[(I)]
\item $\alpha = (m)\cdot \gamma' \cdot (2) \cdot \gamma$ and  $m\in \N_0$,
$\gamma'\in C_2'$, $\gamma\in C_2$ or
\item $\alpha = (1,3) \cdot \gamma$, $\gamma\in C_2$ or
\item $\alpha = (m)\cdot \gamma' \cdot (3) \cdot \gamma$ and  $m\in \N_0$,
$\gamma'\in C_2'$, $\gamma\in C_2$.
\end{enumerate}
\end{enumerate}
In particular, for all the compositions listed above, $\qs_\alpha$
is \fmf.
\end{theorem}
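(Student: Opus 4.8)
Throughout, recall that $F_\alpha$ is always a summand of $\qs_\alpha$ with coefficient $1$ (Lemma~\ref{lem:reduction}(i)), so ``$\qs_\alpha$ has one $F$-component'' means $\qs_\alpha=F_\alpha$ and ``$\qs_\alpha$ has two $F$-components'' means $\qs_\alpha=F_\alpha+F_\beta$ with $\beta\ne\alpha$. In particular, once (i) and (ii) are established the closing assertion is automatic: for every listed $\alpha$ we will have displayed the \emph{entire} $F$-expansion of $\qs_\alpha$, and all its coefficients lie in $\{0,1\}$. The plan is to reduce everything to the way consecutive pairs of parts of $\alpha$ govern the SCTx of shape $\alpha$, feeding the structural facts through Lemma~\ref{lem:reduction} and Corollary~\ref{cor:1or2add}.

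For (i): if $\alpha=(m)\cdot\gamma$ with $\gamma\in C_2$, then the one-row shape $(m)$ has a unique SCT (its canonical filling), so $\qs_{(m)}=F_{(m)}$ (reading $\qs_\emptyset=1$ when $m=0$), and Corollary~\ref{cor:1or2add} gives $\qs_\alpha=F_{(m)\cdot\gamma}=F_\alpha$. Conversely, if $\qs_\alpha=F_\alpha$ then Lemma~\ref{lem:reduction}(ii) forces every pair $(\alpha_i,\alpha_{i+1})$ into $\{(m,1),(1,2)\mid m\ge1\}$; an elementary argument then shows that each part past the first is a $1$ or a $2$ with every such $2$ immediately preceded by a $1$, and a short case split on $\alpha_1$ writes $\alpha=(m)\cdot\gamma$ with $m\in\N_0$, $\gamma\in C_2$.

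For (ii), ``only if'': since $\qs_\alpha\ne F_\alpha$, part (i) gives a pair $(\alpha_i,\alpha_{i+1})$ outside $\{(m,1),(1,2)\}$; and since $\qs_\alpha$ has at most two $F$-components, Lemma~\ref{lem:reduction}(iii) forces every pair into $S:=\{(m,1),(1,2),(2,2),(2,3),(1,3)\mid m\ge1\}$, so each ``offending'' pair is one of $(2,2),(2,3),(1,3)$. It then remains to show that there is exactly one offending pair and to locate it: applying (i) to the all-good blocks on either side gives $(\alpha_1,\dots,\alpha_i)=(m)\cdot\gamma'$ and $(\alpha_{i+2},\dots,\alpha_k)\in C_2$, and the type of the offending pair forces $\gamma'\in C_2'$ (with the degenerate prefix $(1)$ producing case~(II)), yielding (I), (II), or (III). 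The shapes that survive ``all pairs in $S$'' and ``at most one offending pair'' but are not of these forms --- for instance $(1,1,3)$, $(2,1,3)$, or anything carrying two offending pairs --- must be eliminated by hand: in each such case one exhibits a \emph{third} SCT whose descent set differs from those of the canonical filling and of the SCT supplied by Lemma~\ref{lem:reduction}, the modifications occurring in disjoint rows so that the three descent sets are genuinely distinct.

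For (ii), ``if'': case~(II) follows from the direct two-tableau computation $\qs_{(1,3)}=F_{(1,3)}+F_{(2,2)}$ together with Corollary~\ref{cor:1or2add} applied to the tail $\gamma$; for (I) and (III), Corollary~\ref{cor:1or2add} peels off $\gamma$, and it remains to check that $\qs_{(m)\cdot\gamma'\cdot(2)}$ and $\qs_{(m)\cdot\gamma'\cdot(3)}$ each have exactly two $F$-components, which one verifies by enumerating the SCTx of these rigid shapes (a word in $1$'s and $2$'s capped by a single extra part): only the canonical filling and one non-canonical filling occur, and their descent sets differ. The main obstacle is precisely this tension between Lemma~\ref{lem:reduction}, which only ever produces one extra SCT, and the needs of the ``only if'' direction of (ii), where every near-miss shape must be ruled out by constructing a genuine third SCT and checking its descent set is new; dually, the finite-but-fiddly base-case enumerations for the ``core'' shapes $(m)\cdot\gamma'\cdot(2)$ and $(m)\cdot\gamma'\cdot(3)$ are the labor-intensive step in the ``if'' direction.
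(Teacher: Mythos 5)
Your overall strategy coincides with the paper's: part (i) via Lemma~\ref{lem:reduction}(i),(ii) and Corollary~\ref{cor:1or2add}; for part (ii) use Lemma~\ref{lem:reduction}(iii) to confine every consecutive pair to $\{(m,1),(1,2),(2,2),(2,3),(1,3)\}$, classify the compositions with a single ``offending'' pair, and handle the remaining shapes and the exactness of the two-term expansions separately. The gap is in the step you describe as ``eliminated by hand.'' You assert that for the surviving bad shapes a third SCT exists with ``the modifications occurring in disjoint rows,'' but this justification fails precisely in the cases that carry all the difficulty. For $\alpha$ containing consecutive rows of lengths $m,1,3$ with $m\geq 1$ (your examples $(1,1,3)$ and $(2,1,3)$) there is only \emph{one} offending pair, namely $(1,3)$, so Lemma~\ref{lem:reduction} supplies only one non-canonical SCT and there is no second disjoint modification to invoke; the third tableau must be built from scratch, and the paper does so with an explicit filling that perturbs the row of length $m$ together with the $(1,3)$ block. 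Likewise for the overlapping configurations $2,2,2$ and $2,2,3$ the two offending pairs share a row, so the two modifications are not in disjoint rows and one must check directly that a valid third SCT with a new descent set exists; the paper exhibits three explicit fillings spanning all three rows in each case. Without these constructions the ``only if'' direction of (ii) is not established.

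A secondary, smaller gap: in the ``if'' direction you correctly reduce, via Corollary~\ref{cor:1or2add}, to the core shapes $(m)\cdot\gamma'\cdot(2)$, $(1,3)$, and $(m)\cdot\gamma'\cdot(3)$, but the claim that these admit exactly two SCTx is only asserted. The paper's argument here is that the cover relations of $\LC$ force every row to be row filled except at the unique offending pair $(2,2)$, $(1,3)$ or $(2,3)$, where exactly two fillings are possible (and it writes both down); some version of this local analysis is needed, since ``enumerating the SCTx of these rigid shapes'' is exactly the point at issue. With the explicit third fillings for $m,1,3$; $2,2,2$; $2,2,3$ and the two-filling analysis at a single offending pair supplied, your argument would match the paper's proof.
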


\begin{proof}
The first part follows from Lemma~\ref{lem:reduction}
and Corollary~\ref{cor:1or2add}, and alternatively is proved directly in \cite[Corollary 6.9]{QS}.

For the second part, by considering all consecutive pairs of parts in $\alpha$,
we immediately deduce that if $\alpha$ is not one of the compositions listed in the theorem, then $\qs _\alpha$  consists of more than two different terms $F_\beta$  by the SCTx created in the proof of Lemma~\ref{lem:reduction},
unless we have three consecutive rows $i,i+1,i+2$ 
of length $m,1,3$ for some $m\geq 1$; or $2,2,2$;  or $2,2,3$.

In the case  $m,1,3$ for some $m\geq 1$ we can obtain three fillings for these rows:
$$\begin{matrix}
x&x-1&x-2&\cdots\\
x+1\\
x+4&x+3&x+2
\end{matrix}\quad
\begin{matrix}
x&x-1&x-2&\cdots\\
x+2\\
x+4&x+3&x+1
\end{matrix}\quad
\begin{matrix}
x+1&x-1&x-2&\cdots\\
x+2\\
x+4&x+3&x
\end{matrix}$$where $x=\sum _{j=1} ^{i}\alpha _j$; when $m<3$, the first row is accordingly shortened.

Similarly, in the case  $2,2,2$ we can obtain three fillings for these rows:
$$\begin{matrix}
x&x-1\\
x+2&x+1\\
x+4&x+3
\end{matrix}\qquad
\begin{matrix}
x&x-1\\
x+3&x+2\\
x+4&x+1
\end{matrix}\qquad
\begin{matrix}
x+2&x+1\\
x+3&x\\
x+4&x-1
\end{matrix}$$where $x=\sum _{j=1} ^{i}\alpha _j$.

Also, in the case  $2,2,3$ we can obtain three fillings for these rows:
$$\begin{matrix}
x&x-1\\
x+2&x+1\\
x+5&x+4&x+3
\end{matrix}\qquad
\begin{matrix}
x&x-1\\
x+3&x+1\\
x+5&x+4&x+2
\end{matrix}\qquad
\begin{matrix}
x+1&x\\
x+3&x-1\\
x+5&x+4&x+2
\end{matrix}$$where $x=\sum _{j=1} ^{i}\alpha _j$.

We still have to check that the $\qs _\alpha$ for the $\alpha$
listed above expand into exactly two fundamental quasisymmetric functions. 
Note that by the cover
relations in $\LC$, SCTx of shape listed in the  cases above will be created almost
uniquely by being row filled,
apart from the rows corresponding to
$(\alpha _i, \alpha _{i+1})=(2,2), (1,3),(2,3)$.
The cover relations on $\LC$ yield that there are exactly two ways to fill these rows, respectively, where $x=\sum _{j=1} ^{i+1}\alpha _j$:
$$\begin{matrix} x-2&x-3\\x&x-1\end{matrix}\quad\mbox{and}\quad\begin{matrix} x-1&x-2\\x& x-3\end{matrix}$$

$$\begin{matrix} x-3\\x&x-1&x-2\end{matrix}\quad\mbox{and}\quad\begin{matrix} x-2\\x&x-1&x-3\end{matrix}$$

$$ \begin{matrix} x-3&x-4\\x&x-1&x-2\end{matrix}\quad\mbox{and}\quad\begin{matrix} x-2&x-4\\x&x-1&x-3\end{matrix}\:.$$
\end{proof}

\section{Multiplicity free quasisymmetric Schur functions with two parts}\label{sec:QSFmf12parts}
In this section we continue our classification of \fmf\ quasisymmetric Schur functions $\qs _\alpha$.
The case when $\alpha$ has only one part we classified in the
previous section;
when $\alpha$ has exactly two parts we have the following theorem.

\begin{theorem}\label{the:QSFmf12part}
Let $\alpha$ be a composition of $n$ with two parts.
Then $\qs _\alpha$ is \fmf\ if and only if $\alpha$ is one of
\begin{enumerate}[(i)] 
\item $\alpha = (n-1, 1), (1,n-1)$, for $n \geq 2$,
\item $\alpha = (n-2, 2), (2,n-2)$, for $n \geq 4$,
\item $\alpha = (n-3, 3)$, for $n \geq 6$,
\item $\alpha = (3,4), (4,4), (4,5)$.
\end{enumerate}
\end{theorem}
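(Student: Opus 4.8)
The plan is to reduce the problem to a finite computation together with a "growth" obstruction, exactly in the spirit of the proof of Theorem~\ref{the:SFmf}. First I would dispose of the compositions listed as being \fmf. For $(n-1,1)$ and $(1,n-1)$ this is immediate: $(n-1,1)=(n-1)\cdot(1)$ lies in the scope of Theorem~\ref{the:QS12term}(i) (here $\gamma=(1)\in C_2$), so $\qs_{(n-1,1)}=F_{(n-1,1)}$; and $(1,n-1)$ one checks directly to have all SCTx with distinct descent sets, the point being that an SCT of shape $(1,n-1)$ is determined by which entry sits in the first row. For $(n-2,2)$ and $(2,n-2)$: the first is $(n-2)\cdot(2)$, covered by Theorem~\ref{the:QS12term}(ii)(I), so $\qs_{(n-2,2)}=F_{(n-2,2)}+F_{(n-3,1,2)}$ and is \fmf; the case $(2,n-2)$ I would handle by listing the SCTx of shape $(2,m)$ directly and checking their $\com$ values are pairwise distinct. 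Cases (iii) and (iv) --- the shapes $(n-3,3)$ for $n\ge 6$, and the three exceptional shapes $(3,4),(4,4),(4,5)$ --- I would verify by a hands-on description of the SCTx: by the cover relations in $\LC$, an SCT of a two-row shape $(a,b)$ is essentially built by choosing, as entries descend, when a new first-row cell is created versus when a first-row or second-row cell is lengthened, and one can enumerate these and read off descent sets. The bound for $(n-3,3)$ should follow from a uniform such description valid for all $n\ge 6$; the three sporadic shapes are single finite checks.

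The harder direction is to show every other two-part composition fails. Let $\alpha=(a,b)$ with $a+b=n$. If $b=1$ we are in (i); if $b=2$ we are in (ii); so assume $b\ge 3$. Lemma~\ref{lem:reduction}(iii) already produces two SCTx $T',T''$ with $\com(T'),\com(T'')\ne\alpha$ and $\com(T')\ne\com(T'')$ whenever $(a,b)\notin\{(m,1),(1,2),(2,2),(2,3),(1,3)\,:\,m\ge1\}$; together with the canonical filling $T$ (which has $\com(T)=\alpha$, distinct from both), this gives at least three $F$-components, but of course that alone does not contradict \fmf. So the real content is: for $(a,b)$ outside the allowed list I must exhibit two SCTx of shape $(a,b)$ with \emph{equal} descent sets. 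I would do this by finding, for a small "seed" shape, a genuine collision of descent sets, and then propagating it. The natural seeds are $(3,3)$ with $n=6$, shape $(1,4)$, shape $(2,4)$, and shape $(5,3)$ (or whatever the smallest non-listed shapes in each residue family turn out to be); for each I would display two SCTx with the same $\com$.

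The propagation step is where I expect the main obstacle, and it needs more care than in the symmetric case because SCTx do not extend by "adding a box in a new row" as freely as SYTx do. Here the key tool is Corollary~\ref{cor:1or2add}: if $\qs_\alpha$ is not \fmf\ then $\qs_{\alpha\cdot\gamma}$ is not \fmf\ for any $\gamma\in C_2$. But appending $\gamma\in C_2$ only grows $\alpha$ to the \emph{right}, which is the wrong direction --- I need to grow the \emph{parts} $a$ and $b$. So the genuine combinatorial work is to show directly that a descent-set collision in shape $(a,b)$ lifts to one in $(a+1,b)$ and in $(a,b+1)$: given two SCTx $U,V$ of shape $(a,b)$ with $\com(U)=\com(V)$, construct from them two SCTx of shape $(a+1,b)$ (resp.\ $(a,b+1)$) still having equal descent sets --- for instance by inserting the new largest entry $n+1$ into the first row in the appropriate column in both tableaux, checking via the $\LC$ cover relations that this is legal and preserves the descent set (the new entry $n+1$ contributes no descent, and it does not disturb whether $n$ is a descent provided it is placed strictly to the left of wherever $n$ sits, which one can always arrange). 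One must separately check the low cases $n=2,\dots,6$ or so by hand, since the lifting argument only starts working once the rows are long enough. Assembling the seed collisions with this lifting lemma, every two-part shape not on the list is shown not \fmf, and combined with the first half this proves the theorem.
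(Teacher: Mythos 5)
Your hard direction (showing every unlisted two-part shape fails) rests on a lifting lemma that is not only unproved but actually \emph{false} in the form you need it. You propose to propagate a descent-set collision from $(a,b)$ to $(a+1,b)$ and to $(a,b+1)$; but $(3,5)$ is not \fmf\ (e.g.\ $F_{\{2,5\}}$ occurs twice, as in Lemma~\ref{lem:3mmult} with $n=8$) while $(4,5)$ \emph{is} \fmf\ and appears on your own list (iv), so no construction can lift a collision from $(3,5)$ to $(4,5)$ and the step $(a,b)\to(a+1,b)$ is dead. The specific move you sketch is also incompatible with the definition of SCTx: the largest entry is the first cell in the chain of covers in $\LC$ and hence must occupy the first column of the bottom row, so you cannot ``insert the new largest entry $n+1$ into the first row.'' The legal version --- extend the chain by the cover $(a,b)\lessdot(a+1,b)$, i.e.\ put a new smallest entry at $(1,a+1)$ and shift all other entries up by one --- changes whether $1$ is a descent according to whether the old smallest entry sat at $(1,a)$ or at $(2,b)$, and the two colliding tableaux need not agree on this (they do not, for instance, in the pair witnessing the collision in $(3,5)$). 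The paper avoids induction entirely: Lemmas~\ref{lem:3mmult}, \ref{lem:m4mult} and \ref{lem:4mmult} write down, uniformly in $n$ and $m$, explicit pairs of SCTx realizing the collisions $F_{\{2,5\}}$ in $\qs_{(3,n-3)}$, $F_{\{2,5,n-m+3\}}$ in $\qs_{(n-m,m)}$ and $F_{\{2,5,n-m+2\}}$ in $\qs_{(m,n-m)}$, and it kills the diagonal shapes $(m,m)$, $m\geq 5$, via the identity $\qs_{(m,m)}=s_{(m,m)}$ together with Theorem~\ref{the:SFmf} --- a family your seed-and-lift scheme would also have to reach. To repair your argument you would have to replace the lifting lemma by explicit collisions covering all bad shapes, which is precisely what the paper does.

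In the easy direction there is also a factual error: $(n-2,2)$ is \emph{not} of the form $(m)\cdot\gamma'\cdot(2)\cdot\gamma$ of Theorem~\ref{the:QS12term}(ii)(I), since every $\gamma'\in C_2'$ is nonempty and contains a part equal to $1$; in fact $\qs_{(n-2,2)}=F_{\{n-2\}}+\sum_{i=1}^{n-3}F_{\{i,n-1\}}$ has $n-2$ fundamental terms, not two (Lemma~\ref{lem:2row123}(ii)). The conclusion that it is \fmf\ survives, but it needs the direct enumeration of admissible second-row fillings that the paper carries out there --- the same kind of enumeration you correctly anticipate for $(2,n-2)$, $(n-3,3)$ and the three sporadic shapes.
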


We will prove this theorem via the following lemmas, although parts of the result   could  also be obtained from Theorem~\ref{the:SFmf} and Lemma~\ref{lem:2rowhook}.

\begin{lemma}\label{lem:2row123}
\begin{enumerate}[(i)]
\item For $n\geq 3$,
\begin{align*}
\qs _{(n-1, 1)}&=F_{\{n-1\}},\\
\qs _{(1, n-1)}&= \sum_{i=1} ^{n-2} F_{\{i\}}.
\end{align*}
\item For $n\geq 5$,
\begin{align*}
\qs _{(n-2, 2)}&=F_{\{n-2\}}+ \sum_{i=1} ^{n-3} F_{\{i, n-1\}},\\
\qs _{(2, n-2)}&= \sum_{i=2} ^{n-3} F_{\{i\}}+\sum _{j=3}^{n-2}\sum_{i=1} ^{j-2} F_{\{i,j\}}.
\end{align*}
\item  For $n\geq 7$, 
\begin{align*}
\qs _{(n-3, 3)}
=F_{\{n-3\}}+ F_{\{n-4, n-2\}} + \sum_{i=1} ^{n-5} F_{\{i, n-2\}}
+ \sum_{i=2} ^{n-4} F_{\{i, n-1\}}+\sum _{j=3}^{n-2}\sum_{{i, j \in [n-3]} \atop {j\neq i+1}}  F_{\{i,j, n-1\}}.
\end{align*} 
\end{enumerate}
\end{lemma}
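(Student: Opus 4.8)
The plan is to prove Lemma~\ref{lem:2row123} by directly enumerating the SCTx of each relevant two-part shape, using the cover relations in $\LC$ (Definition~\ref{def:CT}) to control exactly which fillings occur, and then reading off the descent set $\des_c(T)$ of each. Since every shape here has the form $(a,b)$ with $a \geq b$, the cell containing $n = a+b$ must sit at the end of a row (it has nothing below or to its right in a two-row diagram), and working downward through the cover relations from $\alpha$ to $\emptyset$ is very restrictive. For the single-part-like cases in (i), the shape $(n-1,1)$ admits only the canonical filling, and $(1,n-1)$ is built by first placing a $1$ in row~$1$ (forced, since a new top part of size~$1$ can be prepended), then at each subsequent step we may either grow the bottom row or — exactly once — have grown the top to match; more carefully, the SCTx of $(1,n-1)$ correspond to the position of the entry in the top cell, giving the entries $i+1$ in row~1 (with $i \in [1,n-2]$) and hence descent set $\{i\}$, which yields the stated sum. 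Part~(i) should be routine.

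For part~(ii), the shape $(n-2,2)$: an SCT is determined by which two entries occupy row~$2$, and the $\LC$ cover relations (one must add to the \emph{first} part of a given size) force these to be either $\{n-1,n\}$ — the canonical filling, descent set $\{n-2\}$ — or a pair $\{i+1, n\}$ with $i+1$ the top-left cell of row~2 and $i+1 < n-1$, using the two-way split displayed at the end of the proof of Theorem~\ref{the:QS12term} for a $(\ast,2)$ pair of rows. Reading descents: $i \in \des_c(T)$ because $i+1$ is in row~$2$ (weakly right of $i$ in row~$1$), and $n-1 \in \des_c(T)$ because $n$ is in row~$2$. Tracking the allowed range $i \in [1,n-3]$ gives $F_{\{n-2\}} + \sum_{i=1}^{n-3} F_{\{i,n-1\}}$. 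The transpose-like shape $(2,n-2)$ is handled symmetrically, with row~$1$ of length~$2$: here the two top entries can be any valid pair dictated by $\LC$, producing a single descent $\{i\}$ when only one ``out of order'' entry appears and a pair $\{i,j\}$ otherwise, over the ranges shown; this matches the displayed double sum. I would double-check the boundary indices by hand against the $n=5$ case.

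The genuinely laborious case is part~(iii), the shape $(n-3,3)$. Now row~$2$ has three cells, and the SCTx are parametrized by which triple of entries fills row~$2$ together with their internal arrangement; the $\LC$ cover relations (again, additions go to the leftmost part of each size, and prepending always creates a top part of size~$1$) must be used to list precisely the legal configurations. I expect four qualitatively different families: the canonical filling (descent $\{n-3\}$); fillings where exactly the last two cells of row~$2$ are ``displaced'', contributing the terms $F_{\{n-4,n-2\}}$ and $\sum_{i=1}^{n-5} F_{\{i,n-2\}}$; fillings where only the final cell $n$ sits in row~$2$ out of natural order, giving $\sum_{i=2}^{n-4} F_{\{i,n-1\}}$; and the ``generic'' fillings with three displaced entries giving the triple-descent terms $F_{\{i,j,n-1\}}$ with $i,j \in [n-3]$, $j \neq i+1$. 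The main obstacle will be confirming that this list of $\LC$-chains is exhaustive and non-redundant — i.e., that no legal SCT of shape $(n-3,3)$ is omitted and that the index ranges (especially the off-by-one constraints like $j \neq i+1$ and the upper limits $n-4, n-5$) are exactly right — since two-row composition tableaux have slightly more freedom than SYTx of two-row shape, and the $\LC$ covering rule about ``the first part of size $k$'' is easy to misapply when two parts temporarily have equal length. I would verify (iii) against a small explicit case such as $n=7$ using the generating data to be safe. Throughout, the descent-set computations and the final assembly into the displayed formulas via Definition~\ref{def:QSasF} are mechanical once the enumeration is pinned down.
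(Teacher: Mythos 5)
Your plan is correct in outline and, for the substantive parts of the lemma, coincides with the paper's proof: both arguments pin down an SCT of a two-part shape by deciding which entries occupy the short row (the rest being forced into decreasing order), use the cover relations of $\LC$ to determine exactly which choices are legal, and then read off descent sets; your four anticipated families for $(n-3,3)$ match the paper's five-way case list for the pair $\{i,j\}$ filling the second row alongside $n$. The one genuine divergence is that the paper does \emph{not} enumerate $(1,n-1)$ or $(2,n-2)$ directly: it obtains $\qs_{(1,n-1)}$ and $\qs_{(2,n-2)}$ by subtraction, using $s_\lambda=\sum_{\alpha\in\langle\lambda\rangle}\qs_\alpha$ together with the explicit expansions of $s_{(n-k,1^k)}$ and $s_{(n-2,2)}$ from Lemma~\ref{lem:2rowhook} (and it quotes Theorem~\ref{the:QS12term} for $\qs_{(n-1,1)}$). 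That shortcut halves the case analysis in (i) and (ii) and is less error-prone than a second direct enumeration; your route costs more bookkeeping but is self-contained. Two small cautions on that bookkeeping: your parametrizations are off by one in places --- for $(1,n-1)$ the top cell holding entry $t$ gives descent set $\{t\}$ with $t\in[1,n-2]$ (so the canonical filling has $t=1$, not $t=2$), and for $(n-2,2)$ the second row is $n,\,j$ with $j\in[1,n-3]\cup\{n-1\}$ giving descent set $\{j,n-1\}$ or $\{n-2\}$ --- and note that the displayed two-row fillings at the end of the proof of Theorem~\ref{the:QS12term} cover only the pairs $(2,2),(1,3),(2,3)$, so they do not directly supply the enumeration for $(n-2,2)$ with $n-2\geq 3$. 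Neither issue affects the final formulas, and you flagged the need to verify indices, but they would need to be cleaned up in a written proof.
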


\begin{proof}
(i) The expression for $\qs _{(n-1, 1)}$ 
is already contained in Theorem~\ref{the:QS12term}.
The expression for $\qs _{(1,n-1)}$ then follows from Lemma~\ref{lem:2rowhook} and the formula for $\qs _{(n-1, 1)}$.

(ii)  Consider 
a composition diagram of shape~$(n-2,2)$.
In order to create an SCT of shape~$\alpha$, if the second row is filled
$$n \ i$$
then the first row is filled with $\{1, \ldots , n-1\}\backslash \{i\}$
in decreasing order from left to right.
Thus, all we need to do is determine which values $i$ can take using the
cover relations  in $\LC$.
This now yields that $i\neq n-2$, and the expression for $\qs _{(n-2,2)}$
immediately follows. The expression for $\qs _{(2,n-2)}$ then follows from
Lemma~\ref{lem:2rowhook} and the formula for $\qs _{(n-2,2)}$.

(iii) Finally,  consider a composition diagram of shape~$(n-3 , 3)$.
In order to create an SCT of this shape, the second row must be filled with
$\{i,j,n\}$ and the first row filled with $\{1, \ldots , n-1\}\backslash \{i,j\}$
in decreasing order from left to right.
For the cover relations of $\LC$ to be satisfied, $\{i,j\}$ must be one of
\begin{enumerate}
\item $\{n-2, n-1 \}$
\item $\{n-3, n-1 \}$
\item $\{i, n-1 \}$ for $1\leq i \leq n-5$
\item $\{i, i+1 \}$ for $1\leq i \leq n-5$
\item $\{i, j \}$ for $1\leq i , j\leq n-3$ and $i<j\neq i+1$
\end{enumerate} which results in the five types of summand listed above, respectively.
\end{proof}

\begin{lemma}\label{lem:3mmult} Let $n\geq 8$. Then $F_{\{ 2,5\}}$ appears in the expansion of $\qs _{(3,n-3)}$ with multiplicity.
\end{lemma}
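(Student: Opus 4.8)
The plan is to produce two distinct SCTx of shape $(3,n-3)$, both with descent set $\{2,5\}$, which by Definition~\ref{def:QSasF} forces $F_{\{2,5\}}$ to appear with multiplicity at least two in $\qs_{(3,n-3)}$. Since $n\geq 8$ we have $n-3\geq 5$, so the second row is long enough to accommodate the small entries we need to place there. An SCT of shape $(3,n-3)$ is built by a saturated chain in $\LC$ from $\emptyset$ up to $(3,n-3)$; concretely, the first row holds three entries and the second row holds the remaining $n-3$ entries, and the constraint is that reading the chain of cover relations in $\LC$ must be legal at every step. The descent set condition $\des_c(T)=\{2,5\}$ means: $3$ lies weakly right of $2$, $6$ lies weakly right of $5$, and for every other $i$ the entry $i+1$ lies strictly left of $i$.

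First I would write down an explicit candidate. For the first SCT $T'$, I want the entries $1,2,3$ distributed so that the first three small numbers create a descent at $2$ but not at $1$; a natural choice is to put $2,3$ in the second row and $1$ in the first row (or more carefully, arrange the top row to be row-filled with a suitable triple while the bottom row is row-filled for the tail). Explicitly one can take
$$T'=\begin{matrix} 3&1&?\\ n&\cdots&6&5&2&\cdots\end{matrix}$$
and for the second SCT $T''$ a genuinely different filling with the same descents, e.g.\ one that moves which small entry sits at the end of the first row while keeping $3$ weakly right of $2$ and $6$ weakly right of $5$. The point is only that the two fillings differ in at least one cell but agree on the location of each $i+1$ relative to $i$ for all $i$. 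I would verify legality by checking the cover relations in $\LC$ step by step: prepending a part of size $1$, or adding $1$ to the first part of a given size, are the only moves, and one checks the partial shapes encountered (which involve only parts of sizes $1,2,3$ among the first few steps, then grow the second row) are all reachable.

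The cleanest route is probably to reuse the machinery already in place: by Corollary~\ref{cor:1or2add} it suffices to find two SCTx with equal descent set for the shape obtained by stripping any trailing block in $C_2$, so one may assume $n-3$ is as small as convenient (namely $n=8$, shape $(3,5)$) and then exhibit by hand two SCTx of shape $(3,5)$ with descent set $\{2,5\}$; the general case follows by row-filling the extra cells of the second row, which does not disturb descents past position $5$. Actually even simpler: for $n\geq 8$ row-fill all of the second row from position $6$ onward in both tableaux identically, so the only freedom is in placing $1,2,3,4,5$ and the head of row two, a finite check. I would carry it out as: (1) reduce to a bounded shape via Corollary~\ref{cor:1or2add}, or equivalently fix the tail by row-filling; (2) exhibit the two explicit SCTx of shape $(3,5)$; (3) confirm each is a legal SCT by exhibiting its chain in $\LC$; (4) confirm both have $\des_c=\{2,5\}$ directly from the definition of $\des_c$.

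\textbf{Main obstacle.} The only real subtlety is making sure the two candidate fillings are \emph{both} legal SCTx — the cover relations in $\LC$ are asymmetric (one must add to the \emph{first} part of a given size), so not every superficially plausible arrangement of $1,\ldots,n$ in a $(3,n-3)$ shape is an SCT. So the crux is choosing the two explicit fillings carefully and checking their $\LC$-chains; everything else (counting descents, reducing the tail) is routine given the results already proved.
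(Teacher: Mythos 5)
There is a genuine gap: you correctly identify that the whole content of the lemma is the exhibition of two explicit SCTx of shape $(3,n-3)$ with descent set $\{2,5\}$, and you correctly flag that verifying legality against the cover relations of $\LC$ is the crux --- but you never actually produce the two tableaux. Your candidate $T'$ is left with a ``?'' in the first row and your $T''$ is only described as ``a genuinely different filling.'' Worse, the sketch you do give cannot be completed: in any SCT the entries of a row decrease from left to right (this is forced by the chain construction in Definition~\ref{lem:makeCTfromLC}), so a first row beginning $3,1$ admits no third entry, and the forced completion of your second row $n,\ldots,6,5,2$ leaves $\{4,3,1\}$ for the first row, giving the tableau with rows $4\,3\,1$ and $n\cdots 6\,5\,2$, which has $1$ as a descent ($2$ sits weakly to the right of $1$), so its descent set is not $\{2,5\}$. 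The paper's proof consists precisely of the step you omit: it takes $T'$ with first row $5,2,1$ and second row $n,n-1,\ldots,6,4,3$, and $T''$ with first row $5,4,2$ and second row $n,n-1,\ldots,6,3,1$, and checks both have descent set $\{2,5\}$.

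A second, smaller problem is your proposed reduction to $n=8$ via Corollary~\ref{cor:1or2add}. That corollary concerns concatenation $\alpha\cdot\gamma$ for $\gamma\in C_2$, i.e.\ appending new \emph{rows} of sizes $1$ and $2$; it says nothing about lengthening an existing part, so it cannot shrink $(3,n-3)$ to $(3,5)$. Your fallback idea --- fix the large entries $n,\ldots,6$ identically at the left end of the second row in both tableaux so that only the placement of $1,\ldots,5$ varies --- is sound and is in effect what the paper does, but it still requires writing down the two placements and checking them, which is the missing step.
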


\begin{proof}
Consider the SCT $T'$ of shape~$(3, n-3)$, whose first row is filled with $\{1,2,5\}$ in decreasing order left to right, and second row is filled with $\{3,4,6,\ldots, n\}$   in decreasing order left to right. Also consider the SCT $T''$ of shape~$(3, n-3)$, whose first row is filled with $\{2,4,5\}$ in decreasing order left to right, and second row is filled with $\{1,3,6,\ldots, n\}$   in decreasing order left to right. Then $\des(T')=\des(T'')=\{2,5\}$ and the result follows. 
\end{proof}

\begin{lemma}\label{lem:m4mult}
Let $n\geq 9$ and $n-m>m\geq 4$. Then $F_{\{ 2,5, n-m+3\}}$ appears in the expansion of $\qs _{(n-m,m)}$ with multiplicity.
\end{lemma}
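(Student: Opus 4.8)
The plan is to exhibit two distinct SCTx of shape $(n-m,m)$ whose descent set is $\{2,5,n-m+3\}$, so that by Definition~\ref{def:QSasF} the coefficient of $F_{\{2,5,n-m+3\}}$ in $\qs_{(n-m,m)}$ is at least $2$. This mimics exactly the construction in Lemma~\ref{lem:3mmult}, where the two tableaux differ only in how the small entries $1,\ldots,5$ are distributed between the two rows; the new feature here is that $m\geq 4$ gives enough room in the second row to also force the extra descent at position $n-m+3$ via the placement of the entry $n-m+3$.

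First I would set up the general form of an SCT of shape $(n-m,m)$ with $n-m>m$. As in the two-part analysis (cf.\ the proof of Lemma~\ref{lem:2row123}), once we decide which $m-1$ values (besides $n$, which must terminate the second row) go into the second row, the first row is forced to be filled with the complementary set in decreasing order, and one checks against the cover relations in $\LC$ which such choices are legal. Concretely, I would take $T'$ to have second row given by $\{1,3,6,7,\ldots,n-m+2,\; n-m+3,\; n-m+5,\ldots,n\}$ — that is, the ``canonical'' second row $\{1,3,6,7,\ldots,n\}$ of the kind appearing in Lemma~\ref{lem:3mmult}, but with the single entry $n-m+4$ bumped up to the first row — and $T''$ to have second row $\{2,3,6,7,\ldots\}$ with the same bump; equivalently $T',T''$ are obtained from the two tableaux of Lemma~\ref{lem:3mmult}-type by a uniform modification of a single column near the end of the shape. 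One then verifies directly that $\des_c(T')=\des_c(T'')=\{2,5,n-m+3\}$: the descents at $2$ and $5$ come exactly as in Lemma~\ref{lem:3mmult} (so these two tableaux restricted to entries $\leq 5$ realize the two fillings with first rows $\{1,2,5\}$ and $\{2,4,5\}$), the block of consecutive row-filled entries contributes no descents, and the one bumped entry produces precisely the descent at $n-m+3$ while introducing no others.

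The main obstacle — and the only place real care is needed — is the legality check against the cover relations of $\LC$: one must confirm that both proposed fillings actually arise from a saturated chain $\emptyset\lessdot\cdots\lessdot(n-m,m)$, i.e.\ that at each stage adding the next entry either prepends a part of size $1$ or adds $1$ to the first part of the appropriate size. Because the hypotheses $n-m>m\geq 4$ and $n\geq 9$ guarantee both rows are long enough, the ``bump'' of the single entry into row $1$ does not create an illegal configuration (the first row stays strictly longer and the relevant columns stay correctly ordered), and the small-entry part is governed by the already-verified computation in Lemma~\ref{lem:3mmult}. I would therefore organize the write-up as: (1) display $T'$ and $T''$ explicitly; (2) check both are valid SCTx via the cover relations; (3) read off that both have descent set $\{2,5,n-m+3\}$; (4) conclude by Definition~\ref{def:QSasF}. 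Everything after step~(1) is routine verification of the type already carried out in Lemmas~\ref{lem:2row123} and~\ref{lem:3mmult}.
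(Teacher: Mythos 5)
Your overall strategy---exhibit two distinct SCTx of shape $(n-m,m)$ with common descent set $\{2,5,n-m+3\}$ and invoke Definition~\ref{def:QSasF}---is exactly the paper's, but the concrete tableaux you propose do not have the right shape, so the construction fails. In the shape $(n-m,m)$ the \emph{first} row is the long one ($n-m$ cells) and the second row has only $m$ cells, whereas your proposed second-row set $\{1,3,6,7,\ldots,n-m+3,\,n-m+5,\ldots,n\}$ is all of $[n]$ except $\{2,4,5,n-m+4\}$ and hence has $n-4$ elements; since $n\geq 2m+1$ and $m\geq 4$ force $n-4>m$, this set cannot fill the second row, and its $4$-element complement cannot fill the first row, which needs $n-m\geq 5$ cells. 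You have carried over the ``long second row'' structure of Lemma~\ref{lem:3mmult}, where the shape is $(3,n-3)$, without accounting for the fact that in $(n-m,m)$ the roles of the rows are reversed. A second, smaller slip: the small entries $\{2,3\}$ you place in the second row of $T''$ would force the complementary small entries $\{1,4,5\}$ into the other row, not $\{1,2,5\}$ as you claim, so even after fixing the shape the verification of the descents at $2$ and $5$ would not reduce to Lemma~\ref{lem:3mmult} as stated.

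The paper's proof keeps your strategy but specifies the \emph{first} (long) row directly: $T'$ has first row $\{1,3,6,7,\ldots,n-m+3\}$ and $T''$ has first row $\{3,4,6,7,\ldots,n-m+3\}$, each of size $n-m$, with second rows the complements $\{2,4,5\}\cup\{n-m+4,\ldots,n\}$ and $\{1,2,5\}\cup\{n-m+4,\ldots,n\}$ of size $m$. No entry is ``bumped'': the large entries split cleanly, with $6,\ldots,n-m+3$ in row $1$ and $n-m+4,\ldots,n$ in row $2$, and the descent at $n-m+3$ arises because $n-m+3$ is the leftmost entry of the first row while $n-m+4$ sits in the second row weakly to its right; the descents at $2$ and $5$ come from the small entries as in Lemma~\ref{lem:3mmult}. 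To repair your write-up, replace your displayed sets by these and then carry out steps (2)--(4) of your plan, which are indeed routine.
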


\begin{proof} As in the proof of Lemma~\ref{lem:3mmult}, we give two sets of integers to fill a row, thus determining two SCTx 
$T', T''$ with $\des(T')=\des(T'')=\{2,5,n-m+3\}$. In this case, the two sets to fill the first row with are
$$\{1,3,6,7,\ldots, n-m+3\}\mbox{ and }\{3,4,6,7,\ldots, n-m+3\}.$$
\end{proof}

\begin{lemma}\label{lem:4mmult}
Let $n\geq 10$ and $n-m>m\geq 4$. Then $F_{\{ 2,5, n-m+2\}}$ appears in the expansion of $\qs _{(m, n-m)}$ with multiplicity.
\end{lemma}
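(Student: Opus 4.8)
The plan is to mirror the proofs of Lemmas~\ref{lem:3mmult} and~\ref{lem:m4mult}: exhibit two distinct SCTx of shape $(m,n-m)$ that share the descent set $\{2,5,n-m+2\}$. As in those proofs, an SCT of shape $(m,n-m)$ with $m<n-m$ is completely determined by which $m$ values occupy the first row, since once those values are chosen each row must be filled in strictly decreasing order left to right (the largest entry $n$ necessarily sitting at the start of the longer second row), and one then has to check that the resulting filling actually arises from a chain of cover relations in $\LC$. So the substantive content is to name the correct two $m$-element subsets of $[n]$ for the first row.

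First I would write down the target descent set $\{2,5,n-m+2\}$ and note what it forces: $3$ must lie weakly right of $2$, $6$ weakly right of $5$, and $n-m+3$ weakly right of $n-m+2$, while all other consecutive pairs must be ascents (the successor strictly left). Guided by the pattern in Lemma~\ref{lem:4mmult}'s companion results, I expect the two first-row sets to be of the form $\{1,3,6,7,\ldots,n-m+2\}$ and $\{3,4,6,7,\ldots,n-m+2\}$ — i.e.\ the analogues of the sets in Lemma~\ref{lem:m4mult} but shifted so the largest first-row entry is $n-m+2$ rather than $n-m+3$, reflecting that the first row now has $m$ cells against a second row of $n-m$ cells. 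For each of these two choices I would fill the first row in decreasing order, fill the second row with the complementary set in decreasing order, verify directly that the descent set is $\{2,5,n-m+2\}$, and check using the cover relations of $\LC$ (Definition~\ref{def:CT}) that the filling is a genuine SCT. The hypotheses $n\geq 10$ and $n-m>m\geq 4$ are exactly what is needed so that both rows are long enough for the prescribed initial segments $\{1,3\}$ or $\{3,4\}$ together with the run $6,7,\ldots$ to fit and stay disjoint, and so that the two subsets are genuinely different.

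The main obstacle, as in the preceding lemmas, is not the arithmetic of the descent set but confirming that each candidate filling is legitimately an SCT — that is, that it is reachable by a sequence of covers in $\LC$, equivalently (by the triple condition characterization of SCTx) that no forbidden configuration occurs among the two rows. Since the ambient shape has only two parts this reduces to a short finite check on the relative order of a handful of small entries ($1,2,3,4,5,6$ and $n-m+2,n-m+3$) across the two rows, so I would carry it out explicitly for each of the two tableaux and then simply record that $\des_c(T')=\des_c(T'')=\{2,5,n-m+2\}$ with $\sh(T')=\sh(T'')=(m,n-m)$, whence $F_{\{2,5,n-m+2\}}$ occurs with multiplicity at least two in $\qs_{(m,n-m)}$.
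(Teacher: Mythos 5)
Your overall framework is fine---two rows, each SCT determined by its first-row set, rows strictly decreasing with $n$ at position $(2,1)$---but the actual candidate sets you name are wrong, and naming correct sets is the entire content of the lemma. First, a counting problem: the first row of $(m,n-m)$ has $m$ cells, while your proposed set $\{1,3,6,7,\ldots,n-m+2\}$ has $2+(n-m-3)=n-m-1$ elements, which exceeds $m$ whenever $n-m>m+1$. So except in the boundary case $n-m=m+1$ your fillings do not exist. Second, even when $n-m=m+1$ the descent set comes out wrong. The reason is that you transported the sets from Lemma~\ref{lem:m4mult} by ``shifting'', but in that lemma the first row is the \emph{long} row, whereas here it is the \emph{short} one; this swaps the relative column positions of the small entries $1,\ldots,6$ across the two rows and hence changes which of them are descents. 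Concretely, with $n=2m+1$ and first row $\{1,3,6,\ldots,m+3\}$, the entry $2$ lands in the last column of the long second row while $3$ sits strictly to its left in the short first row, so $1$ becomes a descent and $2$ does not---the opposite of what $\{2,5,n-m+2\}$ requires; the same problem kills $5$.

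The paper's proof uses genuinely different first-row sets, namely $\{1,2,5,n-2m+6,\ldots,n-m+2\}$ and $\{2,4,5,n-2m+6,\ldots,n-m+2\}$ when $n-m>m+1$ (each of cardinality $3+(m-3)=m$), and a separate pair $\{1,2,5,7,\ldots,n-m+2\}$ and $\{1,2,4,5,8,\ldots,n-m+2\}$ when $n-m=m+1$. The need for a case split at $n-m=m+1$ is itself evidence that a mechanical shift of the Lemma~\ref{lem:m4mult} sets cannot work. To repair your argument you would need to redo the descent analysis from scratch for the shape $(m,n-m)$, tracking column positions of $1,\ldots,6$ and of $n-m+2,n-m+3$ in terms of $m$ and $n-m$, and then verify the cover relations for the resulting fillings.
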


\begin{proof} As in the proof of Lemma~\ref{lem:3mmult} we give two sets of integers to fill a row, thus determining two SCTx 
$T', T''$ with $\des(T')=\des(T'')=\{2,5,n-m+2\}$. If $n-m>m+1$, the two sets to fill the first row with are
$$\{1,2,5, n-2m+6,\ldots, n-m+2\}\mbox{ and }\{2,4,5, n-2m+6,\ldots, n-m+2\}.$$However, if $n-m=m+1$
the two sets to fill the first row with are
$$\{1,2,5, 7,\ldots, n-m+2\}\mbox{ and }\{1, 2,4,5, 8,\ldots, n-m+2\}.$$
\end{proof}

We are now ready to prove Theorem~\ref{the:QSFmf12part}.
\begin{proof}\emph{(of Theorem~\ref{the:QSFmf12part})} Let $\alpha$ be a composition with two parts. By direct calculation we see that $\qs _{(4,5)}$ and $\qs _{(3,4)}$ are \fmf .
Since $s_\lambda = \sum _{\lambda(\alpha)=\lambda}\qs _\alpha$, it follows that $s_{(m,m)}=\qs _{(m,m)}$, and hence $\qs _{(m,m)}$ is \fmf\ if and only if
$(m,m)\in \{(1,1), (2,2), (3,3), (4,4)\}$ by Theorem~\ref{the:SFmf}. The remaining $\qs _\alpha$ stated in the theorem are \fmf\ by Lemma~\ref{lem:2row123}.
For all other $\alpha$, $\qs _\alpha$ is not \fmf\ by Lemmas~\ref{lem:3mmult}, \ref{lem:m4mult}, and \ref{lem:4mmult}.
\end{proof}

\section{Multiplicity free quasisymmetric Schur function families}\label{sec:qsfams}

Refining our results from Section~\ref{sec:FmfSchur}, we can also identify partitions $\lambda$ such that  $\qs _\alpha$ is \fmf\ for all
$\alpha \in \langle\lambda\rangle$. This gives us a classification, in some sense, of Schur functions that are almost \fmf , since
$$s_\lambda = \sum _{\alpha \in \langle\lambda\rangle  }\qs _\alpha .$$
Note that in general it is not true that the quasisymmetric Schur functions
indexed by $\alpha \in \langle\lambda\rangle$ all behave in the same way;
for example, $\qs _{(2,3,3)}$ is \fmf\ while $\qs _{(3,3,2)}$ is not.
Additionally, the classification we want to obtain
yields further quasisymmetric Schur functions that are \fmf .
We accomplish this task by the following theorem, whose proof we devote the rest of this section to.

\begin{theorem}\label{the:Fmffam}
Let $\lambda$ be a partition of $n\in \N$. Then $\qs _\alpha$  is \fmf\
for all $\alpha \in \langle\lambda\rangle $ if and only if $\lambda$ is one of
\begin{enumerate}[(i)]
\item $(n-k, 1^k)$ if $0\leq k \leq n-1$,
\item $(n-2-k, 2, 1^k)$ if $0\leq k\leq n-5$,
\item $(2^a, 1^{n-2a})$ if $2\leq a \leq 4$ and $2a\leq n$,
\item $(3,2,2,1^{n-7})$ if $n\geq 7$,
\item $(3,3), (4,3), (4,4)$.
\end{enumerate}
\end{theorem}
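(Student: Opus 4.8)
The plan is to prove both directions by exploiting the inheritance tools already built in Sections~\ref{sec:QSFmf12terms}--\ref{sec:QSFmf12parts}, together with a small collection of ``bad'' local configurations that force multiplicity. For the \emph{if} direction, I would go through the five families one by one. Family~(i) is immediate: every $\alpha\in\langle(n-k,1^k)\rangle$ has $\lambda(\alpha)=(n-k,1^k)$, so by \eqref{eq:SasQS} the functions $\qs_\alpha$ are summands of the \fmf\ Schur function $s_{(n-k,1^k)}$ (Lemma~\ref{lem:2rowhook}(i)), hence each is itself \fmf. For family~(iii) with $a=2,3,4$ the same argument works, using that $s_{(2^a,1^{n-2a})}$ is \fmf\ by Theorem~\ref{the:SFmf} (after transposing, these are $(n-a,2)$-type or hook-type shapes, or the sporadic $(3,3),(4,4)$ when $n=2a$); similarly $(3,3),(4,3),(4,4)$ in family~(v) and $(3,2,2,1^{n-7})$ in family~(iv) have $s_\lambda$ \fmf\ (the latter is the transpose of $(n-4,2)$ up to the hook tail... actually one checks $\lambda^t=(n-4,3)$-ish; in any case it is on the Theorem~\ref{the:SFmf} list or reduces to it). The one family not covered by ``$s_\lambda$ is \fmf'' is (ii), $(n-2-k,2,1^k)$ with $k\ge 1$: here I would argue directly. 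Any $\alpha\in\langle(n-2-k,2,1^k)\rangle$ is, up to the reductions of Corollary~\ref{cor:1or2add}, obtained from a composition with parts $\{n-2-k,2\}$ by appending/inserting $1$'s and using Lemma~\ref{lem:1or2add}; the key point is that inserting $1$'s never creates a new descent-set coincidence, so it suffices to check the ``core'' two-part cases, which are \fmf\ by Lemma~\ref{lem:2row123}(ii) and Theorem~\ref{the:QS12term}.

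For the \emph{only if} direction, I would argue contrapositively: if $\lambda$ is not on the list, exhibit $\alpha\in\langle\lambda\rangle$ with $\qs_\alpha$ not \fmf. First dispose of $w(\lambda)=1$ (trivial) and $\ell(\lambda)\le 2$, where Theorem~\ref{the:QSFmf12part} already pins down exactly which two-part $\lambda$ survive, and all of those appear in the present list. So assume $\ell(\lambda)\ge 3$ and $w(\lambda)\ge 2$. The strategy is to find, inside some rearrangement $\alpha$ of $\lambda$, three consecutive rows realizing one of the multiplicity-forcing patterns. The cleanest lever is: if $\lambda$ has a part $\ge 3$ together with enough other structure, place that part in a position where the triple of rows $i,i+1,i+2$ reads $m,1,3$ or $m\ge 1$ with $\alpha_{i+1}\ge 4$, etc., and invoke Lemma~\ref{lem:reduction}(iii) or the explicit triple-fillings in the proof of Theorem~\ref{the:QS12term}. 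Concretely: if $w(\lambda)\ge 5$, order the parts as $(\ldots,w(\lambda),\ldots)$ with a $1$ before it when available, giving a bad pair by Lemma~\ref{lem:reduction}(iii) unless $\lambda$ is too short, and the short exceptions are hooks (family~(i)). If $w(\lambda)\in\{3,4\}$ we need a more careful case split on how many parts exceed $1$ and how many equal the width; the surviving shapes are exactly $(n-2-k,2,1^k)$, $(2^a,1^{n-2a})$, $(3,2,2,1^{n-7})$, and the sporadics, and for every other arrangement one of Lemmas~\ref{lem:3mmult}--\ref{lem:4mmult} or a direct three-row filling produces a repeated descent set. Finally, to rule out $(2^5,1^{n-10})$ and similar ``just over the edge'' shapes, I would reuse the skew-Schur computation: $(2^5)$ has $s_{(2^5)}$ not \fmf\ by Theorem~\ref{the:skewFmf} reasoning, and more directly exhibit $\alpha=(2,2,2,2,2,1,\ldots)$ with two SCTx sharing a descent set, extending the $2,2,2$ triple-filling from the proof of Theorem~\ref{the:QS12term}.

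The step I expect to be the genuine obstacle is the \emph{only if} direction for width $3$ and $4$: there the boundary between \fmf\ and not is delicate and sensitive to the \emph{order} of the parts (the paper itself flags that $\qs_{(2,3,3)}$ is \fmf\ while $\qs_{(3,3,2)}$ is not), so one cannot simply reason about the partition $\lambda$ — one must show that \emph{some} rearrangement fails whenever $\lambda$ is off the list, while the families on the list survive \emph{every} rearrangement. This requires a careful bookkeeping argument: classify the multiset of parts by (number of $1$'s, number of $2$'s, number of parts $\ge 3$ and their sizes), and for each class either produce an explicit bad $\alpha$ using the triple-row gadgets, or recognize the class as one of the five families and push it through the Corollary~\ref{cor:1or2add} / Lemma~\ref{lem:1or2add} reduction down to a base case handled by Theorem~\ref{the:QS12term} or Lemma~\ref{lem:2row123}. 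The sporadic shapes $(3,3),(4,3),(4,4),(3,2,2,1^{n-7})$ will need individual verification (small cases by direct computation, the infinite family $(3,2,2,1^{n-7})$ by the append-$1$'s reduction to $\qs_{(3,2,2)}$), and I would isolate these as a short lemma before the main induction.
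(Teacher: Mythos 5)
Your ``if'' direction rests on the claim that each family on the list has $s_\lambda$ \fmf, so that every summand $\qs_\alpha$ in $s_\lambda=\sum_{\alpha\in\langle\lambda\rangle}\qs_\alpha$ inherits the property. That inference is sound when the premise holds, but the premise is false for the two families where the theorem has real content. For $\lambda=(2^a,1^{n-2a})$ with $a\in\{3,4\}$ and $n>2a$ the transpose is $(n-a,a)$, which is not on the list in Theorem~\ref{the:SFmf} (for instance $(2,2,2,1)^t=(4,3)$, shown there to contain $F_{\{2,5\}}$ with multiplicity two); and $(3,2,2,1^{n-7})$ contains $(3,2,1)$, so $s_\lambda$ is again not \fmf. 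These are exactly the cases where each individual $\qs_\alpha$ is \fmf\ even though their sum is not, so no argument routed through $s_\lambda$ can succeed. The paper instead proves Lemma~\ref{lem:12exp}, a block decomposition of SCTx of shapes with all parts in $\{1,2\}$ showing that the $F$-expansion of $\qs_\alpha$ factors through the coefficients $d_{(2^{e_i})\gamma_i}$ of the separate $2$-blocks (whence Corollary~\ref{cor:2222}), and gives a separate bespoke analysis for $\langle(3,2,2,1^{n-7})\rangle$ in Lemma~\ref{lem:3221}.

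Family (ii) has a similar gap: your reduction ``inserting $1$'s never creates a new descent-set coincidence'' is not backed by any lemma --- Corollary~\ref{cor:1or2add} only controls a \emph{trailing} block from $C_2$, not $1$'s prepended or interleaved --- and this is precisely the hard step. The paper's Lemma~\ref{lem:21m} splits $\langle(n-2-k,2,1^k)\rangle$ into three arrangements and handles, for example, $(1^{k_1},2,1^{k_2},n-2-k)$ by a direct analysis of which entries can occupy which rows; that case does not reduce to a two-part composition at all. Your ``only if'' direction is in the right spirit (the paper likewise propagates small non-\fmf\ compositions such as $(2,2,m)$, $(2,2,3,2)$, $(2^5)$ and $(1,4,3)$ into longer ones via Lemma~\ref{lem:leq}, sorting by width), but as written it is a plan rather than a proof, and the width-$3$ and width-$4$ bookkeeping you flag as the obstacle is exactly where the explicit counterexample tableaux must be produced.
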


Observe that from the cover relations on $\LC$ we obtain the following result.
\begin{lemma}\label{lem:12exp}
Let $\alpha = (1^{f_1},2^{e_1},1^{f_2},\ldots ,1^{f_{k-1}},2^{e_{k-1}},1^{f_k})$,
where $f_1, f_k \in  \N_0$, $f_2, \ldots, f_{k-1} \in \N$, $e_1,\ldots,e_{k-1} \in \N$.
Then
$$\qs_\alpha=
\sum_{(\gamma_1,\ldots,\gamma_{k-1})} (\prod_i d_{(2^{e_i}) \gamma_i})  
F_{(1^{f_1}) \cdot \gamma_1 \cdot (1^{f_2}) \cdot \gamma_2 \cdot \cdots \cdot (1^{f_{k-1}}) \cdot \gamma_{k-1} \cdot ( 1^{f_k})}\:,$$
where the sum runs over all $(k-1)$-tuples of compositions
$(\gamma_1,\ldots,\gamma_{k-1})$ with $\gamma_i \vDash 2e_i$, $i=1,\ldots,k-1$.
\end{lemma}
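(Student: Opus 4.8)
The plan is to prove Lemma~\ref{lem:12exp} as an immediate consequence of the cover relations in $\LC$ together with the two bijections of Lemma~\ref{lem:1or2add} (equivalently Corollary~\ref{cor:1or2add}). The key observation is that when we build an SCT of shape $\alpha = (1^{f_1},2^{e_1},1^{f_2},\ldots,1^{f_{k-1}},2^{e_{k-1}},1^{f_k})$ via a saturated chain $\emptyset \lessdot \cdots \lessdot \alpha$ in $\LC$, the parts equal to $1$ are forced: in a cover relation $\beta \lessdot \gamma$, a new cell either starts a new first row of size $1$, or lengthens the leftmost part of a given size. So a row of length $1$ in the final shape can only ever have been created (and then never enlarged, since it stays of length $1$), and the entry it receives is completely determined by when in the chain it was added; there is no choice. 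Hence all the combinatorial freedom in producing SCTx of shape $\alpha$, and in the resulting descent compositions, lies in the maximal blocks $2^{e_i}$ of parts of size $2$.

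First I would make precise the statement that the restriction of an SCT of shape $\alpha$ to the rows of one block $2^{e_i}$ behaves like an SCT of shape $(2^{e_i})$, shifted. Concretely, using the canonical/row-filling conventions from Section~\ref{subsec:SCT} and the cover relations of Definition~\ref{def:CT}, I would argue that an SCT $T$ of shape $\alpha$ decomposes as a concatenation: the rows of length $1$ before the first block of $2$'s are row filled with an initial segment of $\{1,\ldots,n\}$; the first block of $e_1$ rows of length $2$ is filled, after subtracting the appropriate constant, by an SCT of shape $(2^{e_1})$; then the next rows of length $1$ are row filled; and so on. This is exactly the content of Lemma~\ref{lem:1or2add}: appending a single row of size $1$ (this is $\alpha \cdot (1)$) shifts the descent composition by $\cdot(1)$, and the blocks of $2$'s are handled by iterating the cover-relation analysis within Definition~\ref{lem:makeCTfromLC}. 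I would then count: SCTx of shape $\alpha$ with descent composition of the displayed block form are in bijection with tuples $(T_1,\ldots,T_{k-1})$ of SCTx $T_i$ of shape $(2^{e_i})$, and if $\com(T_i) = \gamma_i \vDash 2e_i$ then $T$ contributes $F_{(1^{f_1})\cdot\gamma_1\cdot(1^{f_2})\cdots(1^{f_{k-1}})\cdot\gamma_{k-1}\cdot(1^{f_k})}$. Summing over all tuples and using $d_{(2^{e_i})\gamma_i} = |\{T_i : \sh(T_i)=(2^{e_i}),\ \com(T_i)=\gamma_i\}|$ from Definition~\ref{def:QSasF} gives the claimed formula.

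The one point requiring a little care — and the step I expect to be the main obstacle — is verifying that the ``gluing'' is free, i.e.\ that the choices made within distinct blocks $2^{e_i}$ do not interact and that no descent is created or destroyed at the interface between a block of $2$'s and the adjacent rows of $1$'s beyond what is recorded by concatenating the compositions. For the interface I would check directly, via the definition of $\des_c$ (the set of $i$ such that $i+1$ lies weakly to the right of $i$): the last entry of a block and the first entry of the following $1$-row, and the last $1$-row entry before a block and the first entry of that block, always sit in predictable columns because of the row-filling of the $1$-rows, so the descent/non-descent there is determined exactly by whether we are passing from a longer row to a shorter one. Internal independence of the blocks follows because, in the chain in $\LC$, the cells of one block of $2$'s are all added in a contiguous range of steps that does not overlap the range for another block (the $1$-rows separating them must be filled in between), so the local filling patterns are independent. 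Once this independence is established the product $\prod_i d_{(2^{e_i})\gamma_i}$ and the sum over tuples follow formally, completing the proof.
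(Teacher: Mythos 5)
Your proposal is correct and follows essentially the same route as the paper: both arguments use the cover relations of $\LC$ to show that the length-one rows are forced, that each block $2^{e_i}$ is filled independently by a (shifted) SCT of shape $(2^{e_i})$ in one of $d_{(2^{e_i})\gamma_i}$ ways, and that the descent compositions concatenate. The only difference is organizational: the paper first treats the base case $(1^f,2^e)$ explicitly and then iterates from the bottom rows upward, whereas you argue directly via the contiguity and independence of the chain segments, which is a presentational rather than substantive distinction.
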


\begin{proof}
We begin by considering $\alpha = (1^f, 2^e)$ for $f\in \N_0$, $e\in \N$. Note that in any SCT of shape~$\alpha$ the numbers $1, 2, \ldots , f$ appear in the top $f$ rows in increasing order, and the numbers $f+1, \ldots , f+2e$ appear in the bottom $e$ rows. Otherwise some number  $m>f$ appears in the top $f$ rows and hence one of the numbers $1, 2, \ldots , f$ appears in a row $f+1,\ldots , f+e$, which is impossible by the cover relations on $\LC$. Thus only SCTx with descent
composition $(1^f)\cdot \gamma$, $\gamma \vDash 2e$, can appear, with multiplicity~ $d_{(2^{e}) \gamma}$.

As a consequence, if
$\alpha = (1^{f_1},2^{e_1},1^{f_2},\ldots ,1^{f_{k-1}},2^{e_{k-1}},1^{f_k}) \vDash n$ then an SCT of shape~$\alpha$
can only be constructed by placing $n, n-1 ,\ldots , n-f_k+1$ in increasing order in the bottom $f_k$ rows in the first column,
then placing $n-f_k , \ldots , n-f_k - 2e_{k-1} +1$ in
one of $d _{(2^{e_{k-1}}) \gamma_{k-1}}$
ways in the next $e_{k-1}$ rows and first and second columns,
with $\gamma_{k-1}\vDash 2e_{k-1}$.
Similarly fill the $f_{k-i} + e _{k-i-1}$ rows,
until the final $f_1$ rows are filled uniquely with $1, \ldots , f_1$.

In this way we construct \ $\prod_i d_{(2^{e_i}) \gamma_i}$ \ 
SCTx of shape~$\alpha$ and descent composition
$\gamma=(1^{f_1}) \cdot \gamma_1 \cdot (1^{f_2}) \cdot \gamma_2 \cdot \cdots \cdot (1^{f_{k-1}}) \cdot \gamma_{k-1} \cdot ( 1^{f_k})$,
$\gamma_i \vDash 2e_i$ for $i=1,\ldots,k-1$,
and we are done.
\end{proof}

\begin{corollary}\label{cor:2mf}
Let $\alpha = (1^{f_1},2^{e_1},1^{f_2},\ldots ,1^{f_{k-1}},2^{e_{k-1}},1^{f_k})$
be as above.
Then $\qs_\alpha$ is \fmf\ if and only if $e_i\leq 4$ for all $i$.
\end{corollary}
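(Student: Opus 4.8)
The plan is to reduce the statement, via Lemma~\ref{lem:12exp}, to a single question about the expansion of $\qs_{(2^e)}$ in fundamental quasisymmetric functions: namely, that $\qs_{(2^e)}$ is \fmf\ if and only if $e\leq 4$. Indeed, Lemma~\ref{lem:12exp} expresses $\qs_\alpha$ as a sum over tuples $(\gamma_1,\ldots,\gamma_{k-1})$ with $\gamma_i\vDash 2e_i$, of $F_{(1^{f_1})\cdot\gamma_1\cdot\cdots\cdot\gamma_{k-1}\cdot(1^{f_k})}$ with coefficient $\prod_i d_{(2^{e_i})\gamma_i}$. First I would observe that the map $(\gamma_1,\ldots,\gamma_{k-1})\mapsto (1^{f_1})\cdot\gamma_1\cdot(1^{f_2})\cdot\cdots\cdot(1^{f_{k-1}})\cdot\gamma_{k-1}\cdot(1^{f_k})$ is injective: the blocks of $1$'s of prescribed lengths $f_1,\ldots,f_k$ (with the interior $f_j\geq 1$) together with the known sizes $2e_i$ of the slots let one recover each $\gamma_i$ by reading off the appropriate window of parts. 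Hence the coefficient of a given $F_\beta$ in $\qs_\alpha$ is either $0$ (if $\beta$ is not of the stated concatenated form) or exactly $\prod_i d_{(2^{e_i})\gamma_i}$ for the unique corresponding tuple. Therefore $\qs_\alpha$ is \fmf\ if and only if $\prod_i d_{(2^{e_i})\gamma_i}\in\{0,1\}$ for every tuple, which by taking one $e_i$ at a time (and using that each $d_{(2^{e_j})\gamma_j}$ can be made $1$, e.g. by the canonical filling, via Lemma~\ref{lem:reduction}(i)) is equivalent to: for each $i$, every coefficient $d_{(2^{e_i})\gamma_i}$ is $0$ or $1$, i.e. $\qs_{(2^{e_i})}$ is \fmf.

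It remains to prove the claim that $\qs_{(2^e)}$ is \fmf\ iff $e\le 4$. For the ``if'' direction, I would note that $(2^e)$ is a partition, so $\qs_{(2^e)}=s_{(2^e)}-\sum_{\alpha\in\langle (2^e)\rangle,\ \alpha\ne(2^e)}\qs_\alpha$ is not directly helpful; instead, since $(2^e)\in\langle(2^e)\rangle$ and by \eqref{eq:SasQS} $s_{(2^e)}=\sum_{\alpha\in\langle(2^e)\rangle}\qs_\alpha$, any $F_\beta$ appearing in $\qs_{(2^e)}$ appears in $s_{(2^e)}$ with coefficient at least as large. For $e\le 4$, $s_{(2^e)}$ with $e\in\{2,3,4\}$ corresponds (up to transpose) to $(e,e)$ with $e\le 4$, and $s_{(2,2)},s_{(3,3)},s_{(4,4)}$ are \fmf\ by Theorem~\ref{the:SFmf}; hence so is each summand $\qs_{(2^e)}$. (The cases $e\le 1$ are trivial.) For the ``only if'' direction with $e\ge 5$: again $(2^e)=(2^e)^{tt}$... rather, I would directly exhibit multiplicity. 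Since $(2^5)^t=(5,5)$ and $s_{(5,5)}$ is not \fmf\ by Theorem~\ref{the:SFmf}, Lemma~\ref{lem:transposes} gives that $s_{(2^5)}$ is not \fmf. But this does not by itself show $\qs_{(2^5)}$ is not \fmf, since multiplicity could come from two different $\qs_\alpha$'s with $\alpha\in\langle(2^5)\rangle$. So instead I would construct two SCTx of shape $(2^e)$, $e\ge5$, with the same descent set, directly from the cover relations in $\LC$ — for instance adapting the ``$2,2,2$'' triple of fillings from the proof of Theorem~\ref{the:QS12term} (which already yields, within a $(2^5)$-shape, two fillings of the first three rows with coinciding descent set, extended by row-filling the remaining two rows) — and check their descent sets agree; then $d_{(2^e)\gamma}\ge 2$ for the common $\gamma$.

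The main obstacle I anticipate is precisely the last point: showing that $\qs_{(2^e)}$, not merely $s_{(2^e)}$, fails to be \fmf\ for $e\ge 5$. This requires an explicit pair of standard composition tableaux of shape $(2^e)$ with equal descent sets, built honestly from the $\LC$ cover relations, rather than a soft argument via $s_{(2^e)}$. I expect the cleanest route is to take the three fillings of a $2,2,2$-block displayed in the proof of Theorem~\ref{the:QS12term}: two of them, say the first and second, have descent sets $\{x{-}1,x{+}1,x{+}3,\ldots\}$-type data that one checks coincide, and since appending rows of length $2$ below (by row-filling, as in Corollary~\ref{cor:1or2add}'s philosophy for $C_2$) or stacking identical rows above preserves the coincidence, one obtains two SCTx of shape $(2^e)$ for every $e\ge 3$ with the same descent composition — wait, but $\qs_{(2^3)}=s_{(3,3)}$ \emph{is} \fmf, so the two fillings of the $2,2,2$-block must in fact have \emph{distinct} descent sets; the genuine collision only appears at $e\ge 5$, reflecting that $s_{(5,5)}$ but not $s_{(4,4)}$ has multiplicity. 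Hence the construction must be done carefully for $e=5$ specifically (and then propagated to $e\ge 5$ by stacking extra $2$-rows on top, which is a \fmf-preserving-to-non-\fmf-preserving operation in the obvious direction), and verifying the descent sets agree is the one place where a short but non-vacuous check is unavoidable.
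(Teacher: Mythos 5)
Your reduction via Lemma~\ref{lem:12exp} to the single claim that $\qs_{(2^e)}$ is \fmf\ if and only if $e\leq 4$ is exactly the paper's strategy, and your injectivity argument for recovering the tuple $(\gamma_1,\ldots,\gamma_{k-1})$ from the concatenated descent composition is a worthwhile detail the paper leaves implicit. The gap is in the ``only if'' direction of that claim, which you leave as an unfinished construction of two SCTx of shape $(2^e)$ with equal descent sets for $e\geq 5$; you yourself flag the verification as the unresolved obstacle. But the difficulty you anticipate is illusory: since all parts of $(2^e)$ are equal, the set $\langle (2^e)\rangle$ of rearrangements is the singleton $\{(2^e)\}$, so Equation~\eqref{eq:SasQS} gives the identity $\qs_{(2^e)}=s_{(2^e)}$ on the nose. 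Your worry that ``multiplicity could come from two different $\qs_\alpha$'s with $\alpha\in\langle(2^5)\rangle$'' is therefore vacuous --- there is only one such $\alpha$. Consequently $\qs_{(2^e)}$ is \fmf\ exactly when $s_{(2^e)}$ is, and Theorem~\ref{the:SFmf} (applied to $(2^e)^t=(e,e)$ via Lemma~\ref{lem:transposes}) settles both directions at once: \fmf\ for $e\leq 4$, not \fmf\ for $e\geq 5$. This is precisely the paper's one-line proof. Your ``if'' direction via coefficient domination inside $s_{(2^e)}$ is correct but is really an equality, and the explicit pair of tableaux for $e=5$ together with the propagation to $e>5$ by prepending rows (which would in any case need Lemma~\ref{lem:leq} to be made precise) becomes unnecessary once the singleton observation is in hand.
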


\begin{proof}
By Theorem~\ref{the:SFmf}, $\qs_{(2^a)}=s_{(2^a)}$ is \fmf\ if and only if
$0\leq a \leq 4$.
Hence the claim follows by Lemma~\ref{lem:12exp}.
\end{proof}

\begin{corollary}\label{cor:2222}
Let  $\lambda=(2^a, 1^{n-2a})$, with  $0\leq a \leq n/2$.
Then $\qs _\alpha$ is \fmf\ for all $\alpha \in \langle  \lambda \rangle $
if and only if $a\leq 4$.
\end{corollary}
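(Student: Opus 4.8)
The plan is to reduce the statement about all compositions $\alpha \in \langle \lambda \rangle$ with $\lambda = (2^a, 1^{n-2a})$ to the special case already handled by Corollary~\ref{cor:2mf}. The crucial observation is that every rearrangement of the parts of $(2^a, 1^{n-2a})$ is exactly a composition of the form $\alpha = (1^{f_1},2^{e_1},1^{f_2},\ldots ,1^{f_{k-1}},2^{e_{k-1}},1^{f_k})$ appearing in Lemma~\ref{lem:12exp} and Corollary~\ref{cor:2mf}, with $\sum_i e_i = a$ and $\sum_j f_j = n - 2a$. So the two corollaries are speaking about the same family of compositions, just organized differently: Corollary~\ref{cor:2mf} fixes one such $\alpha$, while here we quantify over all of them.

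First I would record that direction of the equivalence that is immediate: if $a \leq 4$, then for \emph{any} $\alpha \in \langle \lambda \rangle$ we have each block size $e_i \leq \sum_i e_i = a \leq 4$, so $\qs_\alpha$ is \fmf\ by Corollary~\ref{cor:2mf}. For the converse, suppose $a \geq 5$. Then I would exhibit a single bad rearrangement, namely $\alpha_0 = (2^a, 1^{n-2a})$ itself (or equivalently $(1^{n-2a}, 2^a)$), which in the notation of Corollary~\ref{cor:2mf} has $k-1 = 1$, $e_1 = a \geq 5$, so $\qs_{\alpha_0}$ is \emph{not} \fmf\ by that corollary. Since $\alpha_0 \in \langle \lambda \rangle$, this shows $\qs_\alpha$ fails to be \fmf\ for some $\alpha \in \langle \lambda \rangle$, completing the contrapositive.

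The only point requiring a line of care is the claim that a consecutive block of the form $(2^a)$ with $a \geq 5$ sitting inside a larger composition still forces a repeated $F$-coefficient. This is precisely what Lemma~\ref{lem:12exp} provides: the expansion of $\qs_\alpha$ factors through the coefficients $d_{(2^{e_i})\gamma_i}$, and since $\qs_{(2^a)} = s_{(2^a)}$ is not \fmf\ for $a \geq 5$ (Theorem~\ref{the:SFmf}), some $\gamma$ with $d_{(2^a)\gamma} \geq 2$ occurs, and the product formula in Lemma~\ref{lem:12exp} then yields a composition $\beta$ with $d_{\alpha\beta} \geq 2$. But as noted, for the converse direction we do not even need the general block version — taking $\alpha_0 = (2^a,1^{n-2a})$ reduces us verbatim to Corollary~\ref{cor:2mf}. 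There is essentially no obstacle here; the corollary is a bookkeeping consequence of the two preceding results, and the entire argument is the two short implications above.
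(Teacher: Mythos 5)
Your proposal is correct and is precisely the argument the paper intends: Corollary~\ref{cor:2222} is stated without proof as an immediate consequence of Corollary~\ref{cor:2mf}, with the forward direction following from $e_i\leq\sum_i e_i=a\leq 4$ and the converse witnessed by the rearrangement $(2^a,1^{n-2a})$ itself, exactly as you write. Your extra remark justifying the block version via Lemma~\ref{lem:12exp} and the fact that $\qs_{(2^a)}=s_{(2^a)}$ fails to be \fmf\ for $a\geq 5$ (Theorem~\ref{the:SFmf}) matches the proof of Corollary~\ref{cor:2mf} verbatim.
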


\begin{lemma}\label{lem:21m}
Let $\alpha \in \langle (n-2-k, 2, 1^k)\rangle $ for $0\leq k\leq n-5$.
Then $\qs _\alpha$ is \fmf.
\end{lemma}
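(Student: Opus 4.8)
The goal is to show that for $\alpha \in \langle(n-2-k, 2, 1^k)\rangle$ with $0 \leq k \leq n-5$, the quasisymmetric Schur function $\qs_\alpha$ is \fmf. Such an $\alpha$ is obtained by rearranging the parts $n-2-k$ (one copy, with $n-2-k \geq 3$), $2$ (one copy), and $1$ ($k$ copies) into some order. The plan is to reduce to the case where the large part is in the first row, and then analyze the possible SCTx directly using the cover relations on $\LC$.

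First I would use Corollary~\ref{cor:1or2add} and Theorem~\ref{the:QS12term} to dispatch the easy positions: whenever the part $2$ and the large part are \emph{not} both present as obstructions — more precisely, when $\alpha$ has the form $(m)\cdot\gamma$ with $\gamma \in C_2$ (which never happens here since $2 \notin \{$large part$\}$ forces $m$ to be the large part but then $\gamma$ must contain a $2$ and be in $C_2$, which it can be) — wait, actually this does cover many cases. Let me restructure: the key structural observation is that the part $2$ and the big part are the only parts exceeding $1$. If the big part comes first, $\alpha = (n-2-k)\cdot\gamma$ where $\gamma$ is a rearrangement of $(2, 1^k)$; if moreover the $2$ is not immediately preceded by a $1$-block-then-something, one can often write $\gamma \in C_2$ or $\gamma \in C_2' \cdot (2) \cdot C_2$-type patterns and apply Theorem~\ref{the:QS12term}. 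So the first step is to enumerate, up to the reductions of Corollary~\ref{cor:1or2add}, the genuinely distinct shapes: essentially the position of the big part relative to the single $2$, and the number of $1$'s on each side.

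The main work is the case where the big part is \emph{not} in the first row, so that some $1$'s (or the $2$) sit above it. Here I would argue as in the proof of Lemma~\ref{lem:12exp}: in any SCT of shape~$\alpha$, the entries are forced into large contiguous blocks by the cover relations on $\LC$, because a part of size $\geq 3$ can only be built up by repeatedly adding to the leftmost part of its current size, and the rows of size $1$ above it constrain which small entries may appear where. I expect that after this forcing, the only freedom is in filling the two rows near the junction of the big part and the $2$ (and possibly a single adjacent $1$-row), exactly the local configurations already analyzed in the proof of Lemma~\ref{lem:reduction} and Theorem~\ref{the:QS12term} — giving at most two or three SCTx whose descent sets are then checked to be pairwise distinct. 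Concretely, one writes $\qs_\alpha = \sum d_{\alpha\beta} F_\beta$ and shows each nonzero $d_{\alpha\beta}$ equals~1 by exhibiting the unique reading-word/cover-relation sequence producing a tableau with descent composition $\beta$.

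The hard part will be organizing the casework cleanly: there are several inequivalent positions for the big part (first row; between two blocks of $1$'s; below the $2$; above the $2$), and in each the descent-set computation must be done and the resulting compositions shown to be distinct. I anticipate using Corollary~\ref{cor:1or2add} aggressively to strip off trailing $1$'s and trailing $(1,2)$ patterns, reducing every case to a short composition — something like $(n-2-k)\cdot(2)$, $(n-2-k, 1, 2)$, $(2, n-2-k)$, $(1, 2, n-2-k)$, $(2, 1, n-2-k)$, or $(1, n-2-k, 2)$ with small decorations — for which \fmf-ness either follows from Lemma~\ref{lem:2row123} and Theorem~\ref{the:QSFmf12part} or is checked by the explicit two-SCT analysis above. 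The essential point, which makes the whole argument go through, is that with only one part above size~$2$ and that part not interacting with a second large part, the SCT-filling problem is rigid enough that no descent set is ever repeated.
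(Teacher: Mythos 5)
Your opening move matches the paper's: use Corollary~\ref{cor:1or2add} to strip the maximal trailing pattern from $C_2$, and then sort the remaining shapes by the position of the large part relative to the single $2$. The paper arrives at exactly three reduced cases, namely $(1^{k_1},n-2-k,1^{k_2},2)$ with $k_2>0$ (which it handles by noting $s_{(n-2-k,1^{k_1})}$ is \fmf\ by Lemma~\ref{lem:2rowhook}, hence $\qs_{(1^{k_1},n-2-k)}$ is \fmf\ by Equation~\eqref{eq:SasQS}, and then re-appending), $(1^{k},n-2-k,2)$, and $(1^{k_1},2,1^{k_2},n-2-k)$.

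The genuine gap in your proposal is the claim that the reduction lands on \emph{short} compositions checkable by Lemma~\ref{lem:2row123}, Theorem~\ref{the:QSFmf12part}, or a local two-row analysis. Corollary~\ref{cor:1or2add} only removes \emph{trailing} patterns; it gives you no way to remove the leading block $1^{k_1}$, nor the interior block $1^{k_2}$ when the large part sits at the bottom. So the families $(1^{k},n-2-k,2)$ and $(1^{k_1},2,1^{k_2},n-2-k)$ survive the reduction with unboundedly many rows, and nothing proved earlier in the paper covers them. Your fallback --- that the cover relations force everything except ``the two rows near the junction,'' as in Lemma~\ref{lem:reduction} or Lemma~\ref{lem:12exp} --- is not correct for these shapes. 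For example, in an SCT of shape $(1^{k},n-2-k,2)$ there is substantial global freedom: one chooses which $k-1$ of the smaller entries occupy the top $k-1$ single-cell rows, in addition to how the largest entries are distributed between the last two rows. The number of SCTx is large; what must be shown is that the descent set still determines the tableau. The paper does this by a bespoke argument: it takes $r$ maximal with $n,\dots,n-(r-1)$ in the last two rows, shows $r\geq 4$ and that these entries form a subtableau governed by $s_{(r-2,2)}$ (which is \fmf), and shows the remaining entries are recoverable from the descent set; a similar entry-tracking argument handles $(1^{k_1},2,1^{k_2},n-2-k)$. Your proposal does not contain these arguments and cannot be completed by citations to the earlier local lemmas, so as written it does not establish the statement.
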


\begin{proof} 
By Corollary~\ref{cor:1or2add} we may
drop a trailing sequence of ones from $\alpha$; thus
we only have to consider  the following three cases.
\begin{description}
\item[Case $\alpha = (1^{k_1},n-2-k, 1^{k_2}, 2)$, where $k_1+k_2=k, k_2>0$]
By Lemma~\ref{lem:2rowhook},  $s_{(n-2-k, 1^{k_1})}$ is \fmf .
Thus $\qs _{(1^{k_1}, n-2-k)}$  is \fmf\ by Equation~\eqref{eq:SasQS},
and $\qs _\alpha$ is \fmf\ by Corollary~\ref{cor:1or2add}.
\item[Case $\alpha = (1^{k},n-2-k, 2)$] Consider an  SCT $T$
    of shape~$\alpha$.
    Let $r$ be maximal such that $n,n-1,\ldots,n-(r-1)$ are in the last two rows of $T$.
By the cover relations in $\LC$ it follows that $r\geq 4$ and that
the last row is filled by two of the $r$ largest numbers. 
Then $n-r$ is the entry in row $k$, and the SCT is determined by the $k-1$ numbers in $\{1,\ldots,n-r-1\}$ which appear in rows $1$ to $k-1$; the remaining entries appear in row $k+1$. The descent set of the SCT is then given by the entries in the first $k$ rows together with the descent set of the subtableaux of shape $(r-2,2)$ filled by the $r$ largest numbers. As the Schur function $s_{(r-2,2)}$ is \fmf ,  then so is $\qs _\alpha$.
\item[Case $\alpha = (1^{k_1}, 2, 1^{k_2},  n-2-k)$, where $k_1+k_2=k$, $n-2-k>2$]  For $T$ an SCT of shape~$\alpha$,
    for $k_2>0$ (resp. $k_2=0$), the cover relations in $\LC$ imply that $n,n-1$ (resp. $n,n-1, n-2$) must appear in the last row.  Let $X$ be the set of entries in rows $k_1+1,\ldots,k_1+1+k_2$, then
    $X\subset \{k_1+1,\ldots,n-2\}$ (resp. $X\subset \{k_1+1,\ldots,n-3\}$ ) and its two smallest
    elements $z>x$ are the entries in row $k_1+1$.
    Let $X'$ be the set of entries in the first $k_1$ rows;
    note that $\max X' < x = \min X$. The  descent set of $T$ is then
    $X'\cup X\setminus\{x\}$ if $z=x+1$ and $X\cup X'$  otherwise.  Thus the descent set determines $T$, and hence $\qs _\alpha$ is \fmf .
\end{description}
\end{proof}

\begin{lemma}\label{lem:3221}
Let $\alpha \in \langle (3,2, 2, 1^{n-7})\rangle  $ for $n\geq 7$.
Then $\qs _\alpha$ is \fmf.
\end{lemma}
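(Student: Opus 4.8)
The statement to prove is Lemma~\ref{lem:3221}: for every $\alpha\in\langle(3,2,2,1^{n-7})\rangle$ with $n\ge 7$, the quasisymmetric Schur function $\qs_\alpha$ is \fmf. The strategy mirrors the proof of Lemma~\ref{lem:21m}: by Corollary~\ref{cor:1or2add} we may delete any trailing string of $1$'s from $\alpha$, so it suffices to treat the compositions $\alpha\in\langle(3,2,2,1^{m})\rangle$ that do not end in a $1$, i.e.\ those in which the three ``large'' parts $3,2,2$ are distributed among the rows with the constraint that the last row has length $\ge 2$, possibly with $1$'s interspersed before and between them. I would enumerate these cases according to which of $3,2,2$ sits in the bottom nonvanishing row and how the $1$'s are placed, getting a short finite list (essentially: the block of large parts is some rearrangement of $\{3,2,2\}$, say $(a,b,c)$ with $c\ge 2$, preceded by $1^{k_1}$, with $1^{k_2}$ between $a$ and $b$ and $1^{k_3}$ between $b$ and $c$). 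In each case I would argue that an SCT of that shape is completely determined by its descent set.

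The mechanism in each case is the same local analysis used in Lemma~\ref{lem:21m}. First, the cover relations in $\LC$ force the largest entries $n,n-1,\dots$ to fill the bottom rows in a restricted pattern: a leading string of $1$-rows can only receive a contiguous block $1,2,\dots,k_1$ in the first column, so those entries — and hence that part of the descent set — are rigid, and symmetrically the very largest entries are pinned into the bottom rows. What is left is a small ``core'' consisting of the rows of lengths drawn from $\{3,2,2\}$ together with the interspersed $1$-rows, filled by a consecutive block of integers. I would then observe that the number of SCT fillings of this core, and the descent data they carry, is governed by skew/composition shapes whose associated symmetric or quasisymmetric Schur functions are already known to be \fmf\ — shapes like $(r-2,2)$ (from Lemma~\ref{lem:2rowhook}), or small shapes such as $(2,2)$, $(3,2)$, $(2,2,2)$, $(3,2,2)$ which one checks directly (the relevant Schur functions $s_{(2^a)}$, $a\le 4$, and $s_{(3,3)}$ being \fmf\ by Theorem~\ref{the:SFmf}) — so that within the core the descent set again determines the filling, and the $1$-rows contribute disjoint, forced descents that cannot collide with those of the core. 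Concatenating these observations, the descent set of the whole SCT splits as a disjoint union of a rigid part and a part determined by a \fmf\ core, hence determines the SCT; by Remark~\ref{rem:fmf-comb} this gives $\qs_\alpha$ \fmf.

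The main obstacle I anticipate is the bookkeeping in the case where \emph{two} of the large parts end up adjacent with no $1$ between them — e.g.\ fragments of shape $(3,2)$, $(2,2)$, or $(2,2,2)$ sitting at the bottom — because then the core is genuinely two- or three-rowed and one cannot reduce it further by Corollary~\ref{cor:1or2add}. There I would need the sharper statement that although $\qs$ of such a core has several $F$-terms (it is not a single $F$), all those terms are distinct \emph{and} remain distinct after being prefixed and interleaved with the forced $1$-row descents; this is where I would lean on the explicit expansions of Lemma~\ref{lem:2row123} and the two-way branching of the $(2,2)$, $(1,3)$, $(2,3)$ rows described in the proof of Theorem~\ref{the:QS12term}. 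The cases where the $1$'s separate all three large parts, by contrast, should be routine: each large part then contributes independently via a one-row or $(r-2,2)$-type analysis, and the argument is essentially a concatenation of instances of Lemma~\ref{lem:21m}.
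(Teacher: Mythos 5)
Your overall plan---strip trailing $1$'s via Corollary~\ref{cor:1or2add}, pin the leading $1$-rows and the largest entries, and reduce to a small ``core''---is the right opening move, and it is close in spirit to how the paper begins. But there is a genuine gap at the heart of the argument: you assert that the descent set of an SCT of shape~$\alpha$ ``splits as a disjoint union of a rigid part and a part determined by a \fmf\ core,'' i.e.\ that the filling decomposes into independent blocks, each occupied by a contiguous range of entries. This is false precisely because of the unique part of size~$3$. In the poset $\LC$ the third cell of the $3$-row is appended by the cover relation ``add $1$ to the first part of size~$2$,'' and this can happen at almost any stage of the construction; consequently that cell may receive an entry far smaller than the other two entries of its row---smaller even than entries sitting in rows of $\alpha$ that lie \emph{above} the $3$-row and above the interspersed $1$-rows. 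For instance, with $\alpha$ ending in $\ldots,2,1^k,3$ there is an SCT whose bottom row is filled $m_1{+}k{+}3,\ m_1{+}k{+}2,\ m_1$ while the $2$-row above the $1$-rows contains $m_1{+}1$ and $m_1{-}1$. Such fillings cross every block boundary in your decomposition, so the core analysis you propose (reading off descent data from $(r-2,2)$-type or $(2^a)$-type shapes on contiguous entries) simply does not see them. The paper's proof is forced to enumerate these exceptional tableaux explicitly and then check that their descent sets---which are recognizable because the entries $m_1{+}1,\ldots,m_1{+}k{+}1$ are all descents---are distinct from one another and from those of the ``decomposable'' tableaux. Without that step your argument shows at best that the decomposable SCTx have distinct descent sets, not that all SCTx of shape~$\alpha$ do.

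A secondary, related weak point: the claim that the interspersed $1$-rows ``contribute disjoint, forced descents that cannot collide with those of the core'' also needs proof rather than assertion. The $1$-rows sandwiched between a $2$ and the $3$ are exactly the rows whose entries get rearranged in the exceptional fillings above, so their contribution to the descent set is not rigid; it is only after comparing the two families of fillings (as the paper does) that one sees the descent sets remain distinct. I would suggest restructuring your case analysis around the position of the part equal to~$3$, writing $\alpha=\beta\cdot(1^k)\cdot(3)\cdot\gamma$ with $\beta$ ending in a~$2$ (or empty), and explicitly listing the fillings in which the last cell of the $3$-row receives a ``late'' (small) entry.
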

\begin{proof}
Note that $\alpha$ can be written as
$\alpha = \beta \cdot (1^k) \cdot (3) \cdot \gamma$ for some suitable compositions
$\beta \vDash m_1$ where the last part of $\beta$ is 2 (or possibly $\beta=\emptyset$),
and $\gamma \vDash m_2$ such that $m_1+m_2+k+3=n$.
Then by the cover relations on $\LC$ it follows that to make an SCT
of shape~$\alpha$ we
first need to make an SCT $T$ of shape~$(2)\cdot \gamma$.

Then we  create an SCT $T'$ of shape $1^k\cdot (3) \cdot \gamma$ from $T$ by placing
$x\in [k+1]$ in the top row of $T+(k+1)$ and $[k+1]\setminus\{ x\}$ in the first column
in increasing order. Then extend this to an SCT $T''$ of shape~$\alpha$ by appending an
SCT of shape~$\beta$ on top of $T'+m_1$. All SCTx created this way have distinct descent
sets since
$\beta, \gamma \in \langle (2,2,1^{r_1})\rangle   _{r_1\geq 0} \cup \langle
(2,1^{r_2})\rangle_{(r_2\geq 0)}\cup \langle  (1^{r_3})\rangle   _{r_3\geq 0}$.

The only SCTx of shape~$\alpha$ that are not created in this way by extending $(2)\cdot \gamma$ are those when we extend $T+(m_1+k+1)$
$$\begin{matrix}
m_1+1&m_1-1\\
m_1+2\\
\vdots\\
m_1+k+1\\
m_1+k+3&m_1+k+2&m_1
\end{matrix}$$or the last two parts of $\beta$ are both $2$ and we extend $T+(m_1+k+1)$ by one of
$$\begin{matrix}
m_1&m_1-2\\
m_1+1&m_1-3\\
m_1+2\\
\vdots\\
m_1+k+1\\
m_1+k+3&m_1+k+2&m_1-1
\end{matrix} \qquad
\begin{matrix}
m_1-1&m_1-2\\
m_1+1&m_1-3\\
m_1+2\\
\vdots\\
m_1+k+1\\
m_1+k+3&m_1+k+2&m_1
\end{matrix} .$$
Note that in each of these additional cases the descent sets are unique and differ from those created earlier since now $\{ m_1+1,\ldots , m_1+k+1\}$ are all descents. Hence  $\qs _\alpha$ is \fmf.
\end{proof}

\begin{lemma}\label{lem:leq}
If $\qs _\alpha = \sum _\beta d_{\alpha\beta} F_\beta$ then
$$d_{\alpha\beta}\leq d_{(\alpha\cdot \gamma)(\beta \cdot\gamma)}$$and
$$d_{\alpha\beta}\leq d_{( \gamma \cdot \alpha)(\gamma \cdot \beta)}$$for any composition $\gamma$.
\end{lemma}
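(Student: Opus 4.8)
The plan is to prove both inequalities by exhibiting an injection from the set of SCTx of shape~$\alpha$ with descent composition~$\beta$ into the set of SCTx of shape~$\alpha\cdot\gamma$ (resp.\ $\gamma\cdot\alpha$) with descent composition~$\beta\cdot\gamma$ (resp.\ $\gamma\cdot\beta$). By Definition~\ref{def:QSasF}, the coefficients $d_{\alpha\beta}$ count exactly these SCTx, so producing such injections immediately gives the claimed bounds.

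For the first inequality, given an SCT $T$ of shape~$\alpha$ with $\com(T)=\beta$, I would attach at the bottom an SCT of shape~$\gamma$ whose entries are the canonical (row) filling of~$\gamma$ shifted up by~$|\alpha|$; more carefully, one must check that this augmented filling is a genuine SCT of shape~$\alpha\cdot\gamma$. This is where a little care is needed: appending $\gamma$ row-filled at the bottom does not in general respect the cover relations in $\LC$, so rather than the naive row filling I would build the bottom portion by the sequence of covers in $\LC$ that first builds $\alpha$ (giving $T$, using covers among $\alpha^{n+1}\lessdot\cdots\lessdot\alpha^1=\alpha$) and then continues with covers that append the parts of~$\gamma$ one cell at a time at the bottom. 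Since from a composition ending in~$\alpha$ we may always prepend-at-top, but here we need the new parts to sit below~$T$, the correct move is instead to read the construction from the standpoint of the second inequality and reduce the first one to it.

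Concretely, for the second inequality I would take an SCT $T$ of shape~$\alpha$ with $\com(T)=\beta$ and build an SCT $\widetilde T$ of shape~$\gamma\cdot\alpha$ by first producing the canonical filling of~$\gamma$ using $|\gamma|$ initial cover relations in~$\LC$ (this fills the top $\ell(\gamma)$ rows and has descent composition~$\gamma$), and then running the $|\alpha|$ cover relations that produced~$T$, each applied now to the compositions with~$\gamma$ prepended; since adding~$1$ to the first part of size~$k$ of $\gamma\cdot(\text{prefix})$ behaves identically once all parts have been lengthened past the lengths appearing in~$\gamma$ — and one arranges the canonical filling of $\gamma$ so this holds — the resulting filling with entries of the $\alpha$-portion shifted by~$|\gamma|$ is a valid SCT. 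Its descent composition is $\gamma\cdot\beta$: the first $|\gamma|-1$ positions record the descents of the canonical filling of~$\gamma$ together with the forced descent at position~$|\gamma|$ (since the entry~$|\gamma|+1$ begins a new row-block of~$\alpha$ below), and the remaining positions reproduce $\des_c(T)$ shifted. The map $T\mapsto\widetilde T$ is injective because $T$ is recovered by deleting the top $\ell(\gamma)$ rows and subtracting~$|\gamma|$. The first inequality follows by the symmetric construction, appending a copy of~$\gamma$ at the bottom via the analogous sequence of covers (here $T$ is built first, then the cells of~$\gamma$ are adjoined below, which is legitimate since prepending a new part of size~$1$ is always an allowed cover and each part of $\gamma$ can then be grown in place).

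The main obstacle I anticipate is verifying that the concatenated sequence of cover relations genuinely stays inside $\LC$ — in particular that, when we prepend the canonical filling of~$\gamma$ and then run the moves of~$T$, the rule ``add~$1$ to the \emph{first} part of size~$k$'' still lengthens the intended cell of the $\alpha$-portion rather than a part inside~$\gamma$. This is handled by choosing the canonical filling of~$\gamma$ (so all parts of $\gamma$ are filled left-to-right, row by row, top to bottom) together with the observation that once $\gamma$ is completely built every further move in the $T$-sequence operates on parts strictly below row~$\ell(\gamma)$, hence on a contiguous tail of the composition where the relative ``first part of size~$k$'' is unaffected by the fixed block~$\gamma$ sitting above. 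A clean way to make this rigorous is to recall that an SCT is equivalent to its defining saturated chain in $\LC$ (Definition~\ref{lem:makeCTfromLC}), and to simply concatenate the two chains; the only check is that the junction cover $\gamma\cdot\emptyset \lessdot \gamma\cdot(1)$ (or its analogue) is valid and that the descent at that junction is exactly the one needed to produce $\gamma\cdot\beta$.
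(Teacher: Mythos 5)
Your overall strategy is the paper's: realize $d_{\alpha\beta}\leq d_{(\alpha\cdot\gamma)(\beta\cdot\gamma)}$ and $d_{\alpha\beta}\leq d_{(\gamma\cdot\alpha)(\gamma\cdot\beta)}$ by injecting SCTx of shape~$\alpha$ into SCTx of the larger shape via a row-filled copy of $\gamma$, and your target fillings are the correct ones (row-filled $\gamma$ with entries $|\alpha|+1,\ldots,|\alpha|+|\gamma|$ appended below $T$ for the first inequality; row-filled $\gamma$ with entries $1,\ldots,|\gamma|$ placed above $T+|\gamma|$ for the second). The gap is in the chain-theoretic verification that these fillings are genuine SCTx, which is where all the content of the lemma lives: you run both chains in the wrong order, and as a consequence you lean on steps that are not covers in $\LC$. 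Recall from Definition~\ref{lem:makeCTfromLC} that in $\emptyset=\alpha^{n+1}\lessdot\cdots\lessdot\alpha^{1}=\alpha$ the cell created at the step into $\alpha^{i}$ receives the entry $i$, so cells are created in \emph{decreasing} order of their entries and every new part is prepended at the \emph{top}. Hence the block carrying the largest entries must be built \emph{first}. For $\alpha\cdot\gamma$ that is the bottom copy of $\gamma$: the correct chain first runs the canonical chain of $\gamma$ and then runs the chain of $T$ with $\gamma$ concatenated on the \emph{right} of every term, $\gamma\lessdot\alpha^{n}\cdot\gamma\lessdot\cdots\lessdot\alpha\cdot\gamma$; each step survives because prepending a $1$ is always legal and ``add $1$ to the first part of size $k$'' still targets the same cell, the parts of $\alpha^{i+1}$ preceding those of $\gamma$ in $\alpha^{i+1}\cdot\gamma$. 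Your final construction for this inequality (``$T$ is built first, then the cells of $\gamma$ are adjoined below\dots since prepending a new part of size~$1$ is always an allowed cover'') does not work: prepending creates a part at the top, so no sequence of covers starting from the fully built $T$ can place new rows below it, and even if it could, those cells would receive the smallest entries rather than $|\alpha|+1,\ldots,|\alpha|+|\gamma|$.

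Symmetrically, for $\gamma\cdot\alpha$ the top copy of $\gamma$ carries the smallest entries and so must be built \emph{last}: run the chain of $T$ first, then prepend-and-grow the rows of $\gamma$ on top of $\alpha$ (each growth step acts on the current first part, hence is always a legal cover, and no interaction with the part sizes of $\alpha$ occurs). Your version builds $\gamma$ first and then runs the chain of $T$ ``with $\gamma$ prepended,'' which requires exactly the junction cover $\gamma\lessdot\gamma\cdot(1)$ that you flag at the end but do not verify; it fails in $\LC$ unless $\gamma=(1^{j})$, since the only part-creating cover is $\beta\lessdot(1)\cdot\beta$. The further difficulty you mention---that ``add $1$ to the first part of size $k$'' may land inside $\gamma$---is real in your ordering and cannot be ``arranged'' away, because the part sizes of $\gamma$ are fixed; in the correct ordering it never arises. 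Once the two chains are concatenated in the right order, your descent-set computation and the injectivity argument (delete the $\gamma$-block and shift) go through, and the proof coincides with the paper's.
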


\begin{proof} Consider an SCT of shape~$\alpha$ and descent set $\set(\beta)$ and extend this to an SCT of shape $\alpha \cdot \gamma$ and descent set $\set(\beta \cdot \gamma)$ by letting the additional rows be row filled. The first result now follows. The second result follows by a similar argument.
\end{proof}

We can now prove the main result of this section.
\begin{proof}\emph{(of Theorem~\ref{the:Fmffam})}
By Lemmas~\ref{lem:2rowhook}, \ref{lem:21m}, \ref{lem:3221}, Corollary~\ref{cor:2222},
and direct computation, we know that the families listed in
Theorem~\ref{the:Fmffam} are \fmf.
All we need to do is show that no other families are \fmf.
In order to do this we identify family members that are not \fmf\ by noting 
certain 
quasisymmetric Schur functions $\qs _\beta$ that are not \fmf\ and then applying Lemma~\ref{lem:leq}. We sort the families $\langle \lambda \rangle  $ for partitions $\lambda=(\lambda _1, \lambda _2 , \ldots , \lambda _{\ell(\lambda)})$ by width $w(\lambda)=\lambda _1$.
\begin{description}
\item[Case $\lambda _1 = m\geq 5$ and $\lambda _2\geq 3$] Note that $\qs_\beta$ for $\ell(\beta)=2$ and $\beta \not\in \{(m, 3), (4,5)\}$ is not \fmf\ by Theorem~\ref{the:QSFmf12part}.
\item[Case $\lambda _1=m\geq 4$ and $\lambda_2=\lambda_3=2$] Note that $\qs _{(2,2,m)}$ is not \fmf \ by the following two SCTx.
$$\begin{matrix}
3&1\\
5&4\\
m&m-1&\cdots&6&2
\end{matrix}\qquad
\begin{matrix}
3&2\\
5&1\\
m&m-1&\cdots&6&4
\end{matrix} .$$
\item[Case $\lambda _1=4, \lambda _2=3, \lambda _3 \geq 1$] Note that $\qs _\beta$ for $\beta \in \{ (1,4,3), (2,4,3), (3,4,3)\}$ is not \fmf.
\item[Case $\lambda _1=\lambda _2=4, \lambda _3 \geq 1$] Note that $\qs _\beta$ for $\beta \in \{ (1,4,4), (2,4,4), (3,4,4), (4,4,4)\}$ is not \fmf.
\item[Case $\lambda _1=\lambda _2=3, \lambda _3 \geq 1$] Note that $\qs _\beta$ for $\beta \in \{ (1,3,3), (3,3,2), (3,3,3)\}$ is not \fmf.
\item[Case $\lambda _1=3, \lambda _2=\lambda _3 =\lambda _4=2$] In this case $\qs _{(2,2,3,2)}$ is not \fmf.
\item[Case $\lambda _1=\lambda _2=\lambda _3 =\lambda _4=\lambda _5=2$] In this case $\qs _{(2^5)}$ is not \fmf.
\end{description}
\end{proof}

\section{Further directions}

We conclude by outlining three possible directions to pursue.

As discussed in the beginning of Section~\ref{sec:QSFmf12terms}, one natural goal would be to generalize Theorems~\ref{the:QS12term}, \ref{the:QSFmf12part}, and \ref{the:Fmffam} to obtain a classification  of \fmf\ quasisymmetric Schur functions, and then generalize this classification to encompass \emph{skew} quasisymmetric Schur functions.

Related to this latter classification, a second goal could be to determine when a skew quasisymmetric Schur function is multiplicity free when expanded as a linear combination of quasisymmetric Schur functions.

Finally, it is possible to return to our original focus of symmetric functions. In particular, using the expansion of integral Macdonald polynomials in terms of fundamental quasisymmetric functions \cite[Equation (7.11)]{QS}, one could investigate when Macdonald and Hall-Littlewood polynomials are \fmf .

\end{document}